\documentclass[anon,12pt]{article}

\usepackage{algorithm,algorithmic}
\usepackage{bm}
\usepackage{tikz}
\usepackage{mathtools}
\usepackage{hyperref}
\usepackage{float}
\usepackage{amsmath}
\usepackage{amsthm}
\usepackage{amsbsy}
\usepackage{amsfonts}
\usepackage{amssymb}
\usepackage{fullpage}
\usepackage{times}

\usetikzlibrary{positioning,arrows}
\newcommand{\eq}[1]{Eq.~(\ref{eq:#1})}

\DeclarePairedDelimiterX{\norm}[1]{\lVert}{\rVert}{#1}
\DeclarePairedDelimiterX{\normtto}[1]{\lVert}{\rVert_{2\rightarrow 1}}{#1}
\DeclarePairedDelimiterX{\abs}[1]{\lvert}{\rvert}{#1}
\DeclarePairedDelimiterX{\inp}[2]{\langle}{\rangle}{#1, #2}

\DeclareMathOperator{\Tr}{Tr}
\DeclareMathOperator*{\argmax}{arg\,max}

\newcommand{\lprp}[1]{\left(#1\right)}
\newcommand{\lbrb}[1]{\left\{#1\right\}}
\newcommand{\lsrs}[1]{\left[#1\right]}
\newcommand{\mb}[1]{\mathbb{#1}}

\newcommand{\mt}{MT}
\newcommand{\rad}{300\lprp{\sqrt{\Tr \Sigma/n} + \sqrt{k\norm{\Sigma} / n}}}
\newcommand{\rady}{300\lprp{\sqrt{\Tr \Lambda/k} + \sqrt{\norm{\Lambda}}}}

\newcommand{\radAlg}{6000\lprp{\sqrt{\Tr \Sigma/n} + \sqrt{k\norm{\Sigma} / n}}}
\newcommand{\radAlgG}{7500\lprp{\sqrt{\Tr \Sigma/n} + \sqrt{k\norm{\Sigma} / n}}}
\newcommand{\radRet}{8000\lprp{\sqrt{\Tr \Sigma / n} + \sqrt{k \norm{\Sigma} / n}}}
\newcommand{\finret}{480000 \lprp{\sqrt{\frac{\Tr \Sigma}{n}} + \sqrt{\frac{\norm{\Sigma}\log 1 / \delta}{n}}}}
\newcommand{\nbu}{3200 \log 1 / \delta}
\newcommand{\nit}{1000 \log \norm{\mu} / \epsilon}
\newcommand{\proj}{\mathcal{P}}
\newcommand{\projp}{\mathcal{P}^\perp}
\newcommand{\optx}{x^*}
\newcommand{\optd}{d^*}
\newcommand{\xt}[1]{x_{#1}}
\newcommand{\dt}[1]{d_{#1}}
\newcommand{\gt}[1]{g_{#1}}
\newcommand{\dd}{\Delta}
\newcommand{\mte}{MTE}
\newcommand{\nite}{50 \log \norm{\mu} / \epsilon}
\newcommand{\rade}{300\lprp{\sqrt{\Tr \Sigma/n} + \sqrt{\norm{\Sigma}k/n}}}
\newcommand{\radae}{1200\lprp{\sqrt{\Tr \Sigma/n} + \sqrt{\norm{\Sigma}k/n}}}
\newcommand{\gpdbe}{1500\lprp{\sqrt{\Tr \Sigma/n} + \sqrt{\norm{\Sigma}k/n}}}
\newcommand{\finrete}{108000 \lprp{\sqrt{\frac{\Tr \Sigma}{n}} + \sqrt{\frac{\norm{\Sigma}\log 1 / \delta}{n}}}}

\newcommand{\vt}[1]{g_{#1}}

\newtheorem{assumption}{Assumption}
\newtheorem{definition}{Definition}
\newtheorem{theorem}{Theorem}
\newtheorem{lemma}{Lemma}
\newtheorem{corollary}{Corollary}
\newtheorem{remark}{Remark}

\title{Fast Mean Estimation with Sub-Gaussian Rates}
\usepackage{times}
\author{%
 Yeshwanth Cherapanamjeri\\
 \texttt{yeshwanth@berkeley.edu}\\
 UC Berkeley
 \and
 Nicolas Flammarion\\
 \texttt{flammarion@berkeley.edu}\\
 UC Berkeley
 \and
 Peter L. Bartlett\\
 \texttt{peter@berkeley.edu}\\
 UC Berkeley
}
\date{}

\begin{document}

\maketitle

\begin{abstract}
  We propose an estimator for the mean of a random vector in $\mb{R}^d$ that can be computed in time $O(n^4+n^2d)$ for $n$ i.i.d.~samples and that has error bounds matching the sub-Gaussian case. The only assumptions we make about the data distribution are that it has finite mean and covariance; in particular, we make no assumptions about higher-order moments. Like the polynomial time estimator introduced by \cite{hopkins2018sub}, which is based on the sum-of-squares hierarchy, our estimator achieves optimal statistical efficiency in this challenging setting, but it has a significantly faster runtime and a simpler analysis.
\end{abstract}

\section{Introduction}

Estimating the mean of a population given a finite sample is arguably the most fundamental statistical estimation problem.
Despite the broad applicability and the fundamental nature of this problem, an estimator achieving the optimal statistical rate has only been discovered recently. However the optimal computational complexity of such an estimator is not well-understood.

In this paper, we are interested in obtaining high confidence estimates of the mean in the simple setting where only the existence of  the covariance of the distribution is assumed. 
That is, we would like to find  the smallest $r_\delta$ such that given samples $X_1, \dots, X_n$ from a distribution $\mathcal{D}$ with mean $\mu$   our estimator $\hat{X}$  satisfies:
\begin{equation*}
  \mb{P} \lbrb{\norm{\hat{X} - \mu} \geq r_\delta} \leq \delta.
\end{equation*}

To understand the inherent statistical limit of this problem, let us consider the simplified setting where the covariance is the identity. The most natural estimator for the mean of the population is the sample mean $\bar{X}=\frac{1}{n}\sum_{i=1}^nX_i$. From the Central Limit Theorem, the distribution of $\bar{X}$ satisfies  $\sqrt{n} (\bar{X}\!-\!\mu)\!\overset{D}{\rightarrow}\! \mathcal{N} (0, I)$, and assuming this conclusion holds for any $n$ allows an $r_\delta$~satisfying
\begin{equation*}
  r_\delta = O\lprp{\sqrt{\frac{d}{n}} + \sqrt{\frac{\log 1 / \delta}{n}}}.
\end{equation*}
\cite{catoni2012challenging} shows that this $r_\delta$ is the optimal statistical performance achievable under such mild assumptions. However, the above confidence interval only holds true asymptotically when the number of samples goes to infinity or when the distribution is sub-Gaussian. For finite sample results with a heavy-tailed distribution, applying Chebyshev's inequality to the empirical mean gives only
\begin{equation*}
  r_\delta = \Omega\lprp{\sqrt{\frac{d}{n\delta}}}.
\end{equation*}
The above bound is weaker than the one obtained by the Central Limit Theorem in two ways, the dependence on the failure probability $\delta$ is polynomial in $1 / \delta$ instead of logarithmic and the term depending on $\delta$ is multiplied by the dimensionality $d$ as opposed to being part of a smaller additive term. Unfortunately, \cite{catoni2012challenging} also shows the above result is tight. That is, for any $n, \delta$, there exists a distribution $\mathcal{D}_{n, \delta}$ for which the bound guaranteed by Chebyshev's inequality is optimal.

The poor performance of the empirical mean is due to its sensitivity to large outliers that occur naturally as part of the sample. The median-of-means framework was devised as a means of circumventing such difficulties. It was independently developed in the one dimensional case by \cite{NemYud83,jerrum1986random,alon1999space} and was later extended to the multivariate case by~\cite{HsuSab16,lerasle2011robust,Min18}. As part of this framework, the samples are first divided into $k$ batches and the mean of the samples is computed within each batch to obtain $k$ estimates $Z_1, \dots, Z_k$. Each of these has mean $\mu$ and variance $\frac{k}{n}I$. The empirical mean is simply the mean of these $k$ estimates, which is sensitive to outliers. The median-of-means estimator 
instead is the geometric median of the $k$ estimates, which has greater tolerance to outliers. The success of the median-of-means estimator is due to the fact that it relies on only a fraction of estimates $Z_i$ being close to the mean as opposed to all the estimates being close. \cite{Min18} shows this gives an improved value of $r_\delta$ as follows:
\begin{equation*}
  r_\delta = O\lprp{\sqrt{\frac{d \log 1 / \delta}{n}}}.
\end{equation*}
The confidence interval guaranteed by the median-of-means estimator is better than the one for the empirical mean by improving the dependence on $1 / \delta$, but it is still poorer than we might expect from the Central Limit Theorem. Subsequent work attempting to bridge this gap achieves better rates than those guaranteed by the median-of-means but with stronger assumptions on the data generating distribution\footnote{A rate of $O\big(\sqrt{d / n} + \sqrt{\log (\frac{\log d}{\delta}) /  n}\big)$ is achieved under a fourth moment assumption on the distribution.}~(\cite{joly2017}).
The question of whether it was statistically feasible to obtain confidence intervals of the form guaranteed by the Central Limit Theorem was finally resolved by \cite{lugosi2017sub}. They devised an improved estimator, based on the median-of-means framework, called the median-of-means tournament, which achieves CLT-like confidence intervals. While the median-of-means estimator relies on the concentration of the number of $Z_i$ close to the mean in Euclidean norm, the median-of-means tournament relies on the fact that along every direction $v$, the number of $Z_i$ close to the projection of the mean concentrates. The freedom to choose a different set of $Z_i$ for each direction allow one to obtain a much smaller confidence interval than the one for the median-of-means estimator. In subsequent work, following the PAC-Bayesian approach of \cite{catoni2012challenging}, \cite{catoni2017dimension} proposed a soft-truncation based estimator which obtains CLT-like confidence intervals provided one has access to estimates of the trace and spectral norm of the covariance matrix.

However, it is not known whether the estimators from \cite{lugosi2017sub,catoni2017dimension} are computationally feasible, as there are no known polynomial time algorithms to compute them. In contrast, the median-of-means and empirical mean can be computed in nearly-linear time (\cite{cohen2016}). To alleviate this computational intractability, \cite{catoni2018dimension} proposed an efficient polynomial time estimator which achieves optimal statistical performance up to second order terms, assuming the existence of higher order moments. The question of computational tractability was subsequently resolved by \cite{hopkins2018sub}, who showed that an algorithm based on a sum-of-squares relaxation of the median-of-means tournament estimator achieves the statistically optimal CLT-like confidence intervals. However, the runtime of this algorithm is exorbitantly large\footnote{Assuming standard runtimes of the Interior Point method for semidefinite programming~(\cite{alizadeh1995interior})} ($O\lprp{n^{24}}$). 

In this paper, we propose a new algorithm with a reduced runtime---$O(n^4+n^2 d)$---and a significantly simpler analysis. Our algorithm is a descent-based method that iteratively improves an estimate of the mean. The main challenge of such an approach is to estimate the descent direction. To this end, we crucially leverage the structure of the solutions to semidefinite programming relaxations of polynomial optimization problems designed to test whether a estimate is close to the mean. Our main contributions are twofold; we first show how exact solutions to the polynomial optimization problem furnish suitable descent directions and that such descent directions can also be efficiently extracted from relaxations of such problems and secondly, we show that these descent directions can be used in a descent style algorithm for mean estimation.
Our paper is organized as follow: in Section~\ref{sec:mnRes}, we present our main result, then in Section~\ref{sec:intuition}, as a warm-up, we devise a descent style algorithm for the case where we are given exact solutions to the polynomial optimization problems mentioned previously and prove that this algorithm achieves optimal statistical efficiency. This sets the stage for Section~\ref{sec:efficient}, where we present our main algorithm based on semidefinite relaxations of the previously defined polynomial optimization problems, leading to computationally efficient sub-Gaussian mean estimation.

\section{Main result}
\label{sec:mnRes}
Formally, our main result\footnote{The constants are explicit but we believe sub-optimal.} is as follows:
\begin{theorem}
  \label{thm:sgmest}
  Let $\bm{X} = (X_1, \dots, X_n) \in \mb{R}^{n \times d}$ be $n$ i.i.d.~random vectors with mean $\mu$ and covariance $\Sigma$. Then Algorithm~\ref{alg:meste} instantiated with Algorithms~\ref{alg:dest} and~\ref{alg:gest} and run with inputs $\bm{X}$, target confidence $\delta$, stepsize $\gamma=1/20$ and number of iterations $T = \nit$ returns a vector $\optx$ satisfying:
  \begin{equation*}
    \norm{\optx - \mu} \leq \max\lprp{\epsilon, \finret},
  \end{equation*}
  with probability at least $1 - \delta$.
\end{theorem}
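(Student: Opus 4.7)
The plan is to establish a geometric-rate contraction $\norm{\xt{t+1} - \mu} \leq (1-c)\norm{\xt{t} - \mu}$ for the iterates produced by Algorithm~\ref{alg:meste}, valid down to a noise floor of order $\radRet$, and then use the iteration count $T = \nit$ to drive the residual below $\epsilon$. Initializing $\xt{0} = 0$ gives $\norm{\xt{0} - \mu} \leq \norm{\mu}$, which is why a logarithmic number of iterations suffices; the final stepping constant $480000$ appearing in $\finret$ arises from combining the per-step contraction factor $c$, the stepsize $\gamma = 1/20$, and the per-direction radius $\rad$.

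The proof begins by invoking the Lugosi--Mendelson-style bucketing event underlying the median-of-means tournament. Partitioning $\bm{X}$ into $k = \Theta(\log 1/\delta)$ blocks and forming the bucket means $Z_1, \ldots, Z_k$, with probability at least $1 - \delta$ a large constant fraction of the $Z_i$ satisfy $\abs{\inp{v}{Z_i - \mu}} \leq \rad$ simultaneously for every unit direction $v$. I would condition on this good event throughout the remainder of the argument, so that $\delta$ enters the final bound only through $k$.

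The heart of the proof is a single-step contraction lemma. On input $\xt{t}$, Algorithm~\ref{alg:dest}, which internally calls the SDP oracle of Algorithm~\ref{alg:gest}, either certifies that $\norm{\xt{t} - \mu} \leq \radRet$, in which case the iterate is already within the target accuracy and we can halt, or it produces a direction $\dt{t}$ and a scalar $\gt{t}$ such that $\gt{t}$ approximates $\norm{\xt{t} - \mu}$ up to a constant factor and $\inp{\dt{t}}{\mu - \xt{t}}$ is at least a constant fraction of $\norm{\xt{t} - \mu}$. With the update $\xt{t+1} = \xt{t} + \gamma \gt{t} \dt{t}$, expanding
\begin{equation*}
  \norm{\xt{t+1} - \mu}^2 = \norm{\xt{t} - \mu}^2 - 2\gamma\gt{t}\inp{\dt{t}}{\mu - \xt{t}} + \gamma^2 \gt{t}^2 \norm{\dt{t}}^2,
\end{equation*}
the smallness of $\gamma = 1/20$ together with the lower bound on the correlation makes the linear term strictly dominate the quadratic remainder, producing the desired per-step contraction by a factor $(1-c)$ for an absolute constant $c > 0$.

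Iterating from $\xt{0} = 0$ for $T = \nit$ steps, the residual either falls below $\epsilon$ or enters the noise regime $\norm{\xt{t} - \mu} \leq \radRet$, at which point the certificate in Algorithm~\ref{alg:dest} triggers; inflating constants through the analysis yields the stated $\finret$ bound. The main obstacle, which I would isolate as a separate proposition proved in Section~\ref{sec:efficient}, is the correctness of the approximate descent-direction oracle: namely, showing that any feasible solution to the SDP relaxation of the natural polynomial test ``is $\xt{t}$ close to $\mu$?'' can be rounded, via a vector extracted from its Gram matrix, to a pair $(\dt{t}, \gt{t})$ satisfying the correlation bound against the true error $\mu - \xt{t}$ under the bucketing event. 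Once that oracle is in place, the remainder is a routine union bound over the $T$ descent steps, whose cost is absorbed into the constants of $\finret$.
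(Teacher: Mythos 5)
Your high-level route is the same as the paper's (descent with a distance oracle and a gradient oracle, per-step contraction outside a noise floor of order $\radRet$, conditioning on a single good event for the bucket means), but there is a genuine gap in what you condition on. You invoke only the pointwise Lugosi--Mendelson event: for every unit $v$, most $Z_i$ satisfy $\abs{\inp{v}{Z_i - \mu}} \leq \rad$ (Assumption~\ref{as:exact}). That event suffices for the idealized algorithm of Section~\ref{sec:intuition}, but it does \emph{not} suffice for the algorithm the theorem is about, which uses the SDP relaxation \ref{eq:mt}. A feasible PSD solution can spread fractional mass across the $b_i$ and across ``directions'' in a way that no single unit vector certifies, so the pointwise event gives no control over the SDP value at $\mu$. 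The deferred proposition you describe --- rounding any feasible SDP solution to a direction correlated with $\mu - \xt{t}$ ``under the bucketing event'' --- is exactly the crux, and as stated it cannot be proved from that event. The paper instead conditions on the strictly stronger Assumption~\ref{as:relax}, that $\max_{X \in \mathcal{S}_r}\sum_i X_{b_i,b_i} \leq k/20$ for all $r \geq \rad$, and establishing that this holds with probability $1-\delta$ is itself nontrivial: it requires relating the SDP at $\mu$ to the $2\!\to\!1$ norm relaxation, Nesterov's rounding constant (Theorem~\ref{thm:contr}), Ledoux--Talagrand contraction, and a bounded-differences concentration argument (Lemmas~\ref{lem:conc}--\ref{lem:exp}, yielding Lemma~\ref{lem:mnConc}). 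Given that stronger event, the rounding of the Gram matrix (top singular vector of $X_v$, with sign fixed by the data) is carried out in Lemmas~\ref{lem:conv}--\ref{lem:gest}; your proposal treats all of this as routine, when it is the main content of the result.

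Two smaller points. First, no union bound over the $T$ descent steps is needed or wanted: the good event concerns only $\bm{Z}$ and is uniform over all candidate points $x$, so once you condition on it every iteration is deterministic; a union bound over $T = \nit$ steps would degrade the failure probability to $T\delta$ (or force $k$ to grow with $\log T$), which cannot simply be ``absorbed into the constants of $\finret$'' while keeping the stated $1-\delta$ guarantee. Second, the algorithm never halts on a certificate; it keeps the iterate with the smallest distance estimate, and the noise-floor case is handled by combining Lemma~\ref{lem:destg} (the estimate is small once an iterate is near $\mu$) with the lower bound of Lemma~\ref{lem:dest} applied to the returned point --- a minor but necessary correction to your ``halt when the certificate triggers'' step.
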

We can make the following comments:
\begin{itemize}
    \item Our estimator is both statistically optimal and computationally efficient. It achieves sub-Gaussian performance under minimal conditions on the distribution, and its runtime is  \break \mbox{$O(n^4+n^2d)$}. See Section~\ref{sec:algorelax} for details.
    \item The dependence of the number of iterations, $T$, on $\norm{\mu}$ can be avoided by initializing the algorithm with the median-of-means estimate. In this case, we can instead use $T = 1000 \log d$ and obtain the same guarantees, avoiding any dependence on the knowledge of $\norm{\mu}, \Tr(\Sigma), \norm{\Sigma}$.
    \item The estimator depends on the confidence level $\delta$. \cite{devroye2016sub} propose an estimator which works for a whole range of $\delta$ but for a restricted class of distributions.
    \item Our result does not explicitly depend on the dimension $d$ and our algorithm can be extended to a Hilbert space by working within the finite dimensional subspace containing the data points.
  \end{itemize}

\section{Warm-up}
\label{sec:intuition}
We present in this section a simple descent based algorithm. This algorithm is computationally inefficient but achieves the same guarantees of Theorem~\ref{thm:sgmest} with a much simpler analysis which nevertheless illustrates the main ideas behind the algorithm and proof of Theorem~\ref{thm:sgmest}.
\subsection{Intuition}
\label{sec:mte}
\begin{figure}[t]
  \centering
  \begin{tikzpicture}[scale=0.5]
    \node (x) at (-10, 0) [circle,fill,inner sep=1.5pt]{};
    \node [below=0.1cm of x] {$x$};
    \node (mu) at (5, 0) [circle,fill,inner sep=1.5pt]{};
    \node [below=0.1cm of mu] {$\mu$};
    \node at (2.6918,   3.7240) [blue,circle,fill,inner sep=1.5pt]{};
    \node at ( 0.8021,  2.4757) [blue,circle,fill,inner sep=1.5pt]{};
    \node at (8.7730,  4.1369) [blue,circle,fill,inner sep=1.5pt]{};
    \node at (-0.3608, -3.1745) [blue,circle,fill,inner sep=1.5pt]{};
    \node at ( 9.8048, -1.4058) [blue,circle,fill,inner sep=1.5pt]{};
    \node at ( 5.6202, -0.8174) [blue,circle,fill,inner sep=1.5pt]{};
    \node at (9.1835,  3.2953) [blue,circle,fill,inner sep=1.5pt]{};
    \node at (-4.8045, -0.8336) [blue,circle,fill,inner sep=1.5pt]{};
    \node at ( 4.0115,  2.1046) [blue,circle,fill,inner sep=1.5pt]{};
    \node at (1.2392,  -3.6) [blue,circle,fill,inner sep=1.5pt]{};
    \draw[-latex,thick] (-10,0) -- node[below] {$\hat{\delta}$} (5,0);
    \draw[thick] (-1,4.5) -- (1,-4);
    \draw[-latex,thick] (-10,0) -- node[above] {$v$} (-0.4,1.92);
    \draw[fill=red,opacity=0.3]  (-1,4.5) -- (10,4.5) -- (10,-4) -- (1,-4) -- cycle;
  \end{tikzpicture}
      \vspace{-.5em}
  \caption{ The direction $v$ solution to \mte{} is well aligned with the vector joining the current estimate $x$ to the true mean $\mu$.}
    \vspace{-1.5em}
  \label{fig:int}
\end{figure}
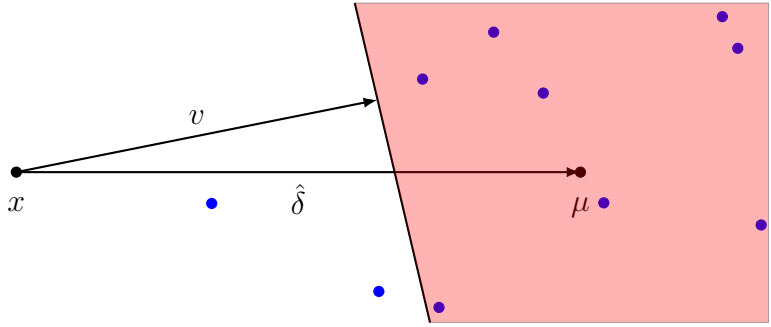
We provide some intuition for our procedure, which iteratively improves an estimate of the mean. We first consider the simpler problem of testing whether a given point is close to the mean. We draw our inspiration from the main technical insight of \cite{lugosi2017sub}, who show that along any direction, most of the bucket means, $Z_i$, are close to the mean, $\mu$. Thus, to test whether a point, $x$, is far from the mean, it is sufficient to check whether there exists a direction where most of the $Z_i$ are far away from $x$ along that direction. This is formally expressed in the following polynomial optimization problem:
      \vspace{-.7em}
\begin{gather*}
  \max \sum_{i = 1}^{k} b_i \\
  b_i^2 = b_i \\
  \norm{v}^2 = 1 \\
  b_i \inp{v}{Z_i - x} \geq b_i^2 r \quad \forall i \in [k] \label{eq:mte} \tag{\textbf{MTE}}
\end{gather*}
This polynomial problem over the set of variables $b_1, \dots, b_k$ and $v_1, \dots, v_d$ is parameterized by $r > 0$, the current estimate $x \in \mb{R}^d$ and the bucket means $\bm{Z} \in \mb{R}^{k\times d}$. Its polynomial constraints are encoding the number of $Z_i$ beyond a distance  $r$ from $x$ when projected along a direction $v$.
Intuitively,  this program tries to find a direction $v$ so as to maximize the number of $Z_i$ beyond a distance $r$ from $x$ along that direction. Here, we know from (\cite{lugosi2017sub}) that for an appropriate choice of $r$, along all directions $v$, a large fraction of the $Z_i$ are close to the mean. Formally, for all directions $v$, $\abs{\{i: \abs{\inp{Z_i - \mu}{v}} \leq r\}} \geq 0.9k$ (see Corollary~\ref{cor:dconce}
). Therefore this optimization problem has a large value  when $x$ is far from the mean and can be used to certify this. 

Strikingly, the direction $v$ returned by the solution of the above problem also contains information about the location of the mean when $r$ is chosen appropriately, which enables improvement of the quality of the current estimate. As illustrated in Figure~\ref{fig:int}, the direction returned by this optimization problem is strongly correlated with the vector joining the current point $x$ to the mean~$\mu$.
\begin{algorithm}[H]
  \caption{Mean Estimation}
  \label{alg:meste}
  \begin{algorithmic}[1]
    \STATE \textbf{Input}: Data Points $\bm{X} \in \mb{R}^{n\times d}$, Target Confidence $\delta$, Number of Iterations $T$, Stepsize $\gamma$

    \STATE $k \leftarrow \nbu$
    \STATE Split data points into $k$ bins with bin $\mathcal{B}_i$ consisting of the points $X_{(i - 1)\frac{n}{k} + 1}, \dots, X_{i\frac{n}{k}}$
    \STATE $Z_i \leftarrow \text{Mean}(\mathcal{B}_i)\ \forall\ i \in [k]$ and  $\bm{Z} \leftarrow (Z_1, \dots, Z_k)$
  \STATE $\optx, \xt{0} \leftarrow \bm{0}$ and  $\optd, \dt{0} \leftarrow \infty$

    \FOR{$t = 0:T$}
      \STATE $\dt{t} \leftarrow \text{Distance Estimation}(\bm{Z}, \xt{t})$
      \STATE $\gt{t} \leftarrow \text{Gradient Estimation}(\bm{Z}, \xt{t})$

      \IF {$\dt{t} < \optd$}
        \STATE $\optx \leftarrow \xt{t}$
        \STATE $\optd \leftarrow \dt{t}$
      \ENDIF

      \STATE $\xt{t + 1} \leftarrow \xt{t} + \gamma \dt{t}\gt{t}$
    \ENDFOR
    \STATE \textbf{Return: } $\optx$
  \end{algorithmic}
\end{algorithm}
\vspace{-.5cm}
\noindent
\begin{minipage}[H]{.49\textwidth}
\begin{algorithm}[H]
  \caption{Distance Estimation}
  \label{alg:deste}
  \begin{algorithmic}[1]
    \STATE \textbf{Input}: Data Points $\bm{Z} \in \mb{R}^{k\times d}$, Current point $x$
    \STATE $d^* = \argmax_{r > 0} \mte(x,r,\bm{Z}) \geq 0.9 k$
    \STATE \textbf{Return: } $d^*$\newline
  \end{algorithmic}
\end{algorithm}
\vspace{.5cm}
\end{minipage}
\hfill \vspace{-.5cm}
\begin{minipage}[H]{.49\textwidth}
\begin{algorithm}[H]
  \caption{Gradient Estimation}
  \label{alg:geste}
  \begin{algorithmic}[1]
    \STATE \textbf{Input}: Data Points $\bm{Z} \in \mb{R}^{k\times d}$, Current point $x$

    \STATE $d^*$ = Distance Estimation$(\bm{Z}, x)$
    \STATE $(b, g) = \mte(x,d^*,\bm{Z})$
      \STATE \textbf{Return: }$g$
  \end{algorithmic}
\end{algorithm}
\vspace{.5cm}
\end{minipage}
Therefore, moving a small distance along the vector $v$ should intuitively take us closer to the mean. Given solutions to the polynomial optimization problem \ref{eq:mte}, we may iteratively improve our estimate until no further change is necessary.

\subsection{Algorithm}
\label{sec:alge}
In this section we put the intuition provided previously into practice and propose  a procedure that estimates the mean in the ideal situation where \ref{eq:mte} can be exactly solved (the method is formally described in Algorithm~\ref{alg:meste}):

\begin{enumerate}
\item First, following the median of means framework, the samples $X_i$ are divided into $k$ buckets and the mean of the samples   within each bucket is computed as $Z_i=\frac{k}{n}\sum_{j=(i-1)n/k}^{in/k} X_j$. 
\item Second, the estimate of the mean is iteratively updated using a descent approach, based on the solution of \ref{eq:mte}. As mentioned in Section~\ref{sec:mte}, we need to run \ref{eq:mte} with an appropriate choice of $r$ for the solution $v$ to be correlated with the direction $x-\mu$. In the Distance Estimation step of our algorithm, we estimate a suitable choice of $r$ (see Algorithm~\ref{alg:deste}). This value of $r$ is subsequently used in the Gradient Estimation step, to obtain an appropriate descent direction $g$ (see Algorithm~\ref{alg:geste}). 
\end{enumerate}
From this point on, we refer to the solution of polynomial equations~\ref{eq:mte} as $(b, v) = \mte(x,r,\bm{Z})$.

\subsection{Analysis warm-up} 

In this simplified setting, we provide an analysis of our method and show that it obtains the same guarantees as those presented in Theorem~\ref{thm:sgmest}.  This is formally expressed in the following theorem for Algorithm~\ref{alg:meste} instantiated with Algorithms~\ref{alg:deste} and~\ref{alg:geste}.
\begin{theorem}
\label{thm:sgmeste}
Let $\bm{X} = (X_1, \dots, X_n) \in \mb{R}^{n \times d}$ be $n$ i.i.d.~random vectors with mean $\mu$ and covariance $\Sigma$. Then Algorithm~\ref{alg:meste} instantiated with Algorithms~\ref{alg:deste} and~\ref{alg:geste} and run with inputs $\bm{X}$, target confidence $\delta$, stepsize $\gamma=1/4$ and number of iterations $T = \nite$ returns a vector $\optx$ satisfying:

\begin{equation*}
  \norm{\optx - \mu} \leq \max\lprp{\epsilon, \finrete},
\end{equation*}
with probability at least $1 - \delta$.
\end{theorem}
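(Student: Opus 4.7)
The plan is to condition on a single high-probability concentration event for the bucket means and then run an inductive descent argument: at each iteration the exact \mte{} solution provides a distance estimate $\dt{t}$ and a direction $\gt{t}$ good enough to force $\norm{\xt{t}-\mu}$ to contract geometrically until we hit a floor set by the concentration radius. First, set $k = \nbu$ and invoke Corollary~\ref{cor:dconce} so that with probability at least $1 - \delta$ the event
\begin{equation*}
\mathcal{E} = \Bigl\{\, \forall\, v\in\mb{R}^d,\ \norm{v}=1:\ \abs{\lbrb{i\in[k]:\abs{\inp{v}{Z_i-\mu}}>r_0}} \le 0.1k \,\Bigr\}
\end{equation*}
holds, where $r_0 := \rad$. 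Everything below is done on $\mathcal{E}$, and the final rate will come from $r_0$ after substituting $k = \Theta(\log 1/\delta)$.

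Next I would show that $\dt{t}$ is within $r_0$ of the true distance $\norm{\xt{t}-\mu}$. For the lower bound, plug $v = (\mu - \xt{t})/\norm{\mu - \xt{t}}$ into \ref{eq:mte}: on $\mathcal{E}$, at least $0.9k$ buckets satisfy $\inp{v}{Z_i-\mu}\ge -r_0$, hence $\inp{v}{Z_i-\xt{t}}\ge \norm{\xt{t}-\mu}-r_0$, so every $r$ below this threshold is feasible for the argmax. For the upper bound, if the \mte{} value at some radius $r$ is at least $0.9k$ with direction $v$, the selected $0.9k$ buckets intersect the $0.9k$-set from $\mathcal{E}$ in at least $0.8k$ buckets (pigeonhole), each giving $\inp{v}{\mu-\xt{t}}\ge r - r_0$, so $\norm{\mu - \xt{t}}\ge r - r_0$. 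Applying the same intersection argument at $r=\dt{t}$ with $v = \gt{t}$ yields the critical descent inequality
\begin{equation*}
\inp{\gt{t}}{\mu - \xt{t}} \ge \dt{t} - r_0 \ge \norm{\xt{t}-\mu} - 2 r_0.
\end{equation*}

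The descent step then expands
\begin{equation*}
\norm{\xt{t+1}-\mu}^2 = \norm{\xt{t}-\mu}^2 - 2\gamma\dt{t}\inp{\gt{t}}{\mu-\xt{t}} + \gamma^2\dt{t}^2,
\end{equation*}
and substituting $|\dt{t}-\norm{\xt{t}-\mu}|\le r_0$, the descent inequality above, and $\gamma=1/4$ gives, whenever $\norm{\xt{t}-\mu}\ge C r_0$ for a suitably large absolute constant $C$, a strict contraction $\norm{\xt{t+1}-\mu} \le \rho\,\norm{\xt{t}-\mu}$ with $\rho < 1$ (morally $\rho \approx 3/4$, coming from $(1-\gamma)^2 = 9/16$ in the well-separated regime). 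Since $\xt{0}=\bm{0}$ gives $\norm{\xt{0}-\mu}=\norm{\mu}$, after $T=\nite$ iterations either the iterate has reached $\epsilon$ or it has first entered the ball of radius $C r_0$ around $\mu$. Because Algorithm~\ref{alg:meste} returns the $\xt{t}$ minimizing $\dt{t}$, and $\dt{t}$ is within $r_0$ of the true distance, we get $\norm{\optx-\mu} \le \max(\epsilon, (C+1)r_0)$; absorbing constants and writing $k = \Theta(\log 1/\delta)$ recovers $\finrete$.

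The main obstacle is that \ref{eq:mte} is free to pick any direction $v$ pushing $0.9k$ bucket means far from $\xt{t}$, not necessarily one aligned with $\mu - \xt{t}$. The two-$0.9k$-sets-intersect-in-$0.8k$ pigeonhole is what converts this adversarial optimum into a genuine descent direction, and the analysis has to balance the resulting $r_0$ error in both the distance and gradient estimates against the step size so that the leading $-2\gamma\dt{t}\inp{\gt{t}}{\mu-\xt{t}}$ term dominates the $r_0$-induced slack; the constants $300$, $3200$, and the $0.9/0.1$ concentration split must be jointly compatible for a contraction strictly below $1$ to close the argument.
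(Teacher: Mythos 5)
Your proposal is correct and follows essentially the same strategy as the paper: condition on a high-probability concentration event for the bucket means, show the Distance Estimation step is accurate to within $O(\rade)$, use a pigeonhole intersection argument to show the MTE direction is well aligned with $\mu - \xt{t}$, and then run the quadratic-expansion descent argument with the returned $\optx$ controlled via the smallest $\dt{t}$. The only differences are presentational (a two-sided concentration event at threshold $0.1k$ rather than the paper's one-sided $0.05k$, and an unnormalized alignment inequality in place of the paper's normalized $\inp{\vt{t}}{\dd}\ge 1/2$), which affect only the constants, not the structure of the argument.
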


The main steps involved in the proof are the following:
\begin{enumerate}
  \item \textbf{Distance Estimation:} We show that the Distance Estimation step in Algorithm~\ref{alg:deste} provides an accurate estimate of the distance of the current point from the mean. See Lemma~\ref{lem:deste}.
  \item \textbf{Gradient Estimation:} Next, we show that when $x$ is far away from the mean $\mu$, the vector $g$ obtained by solving ~\ref{eq:mte} in Algorithm~\ref{alg:geste} is well aligned with the vector joining the current point $x$ to the mean $\mu$. See Lemma~\ref{lem:geste}.
  \item \textbf{Gradient Descent:} Combining the previous two steps, we prove that we eventually converge to a good approximation to the mean.
\end{enumerate}
In the proofs of our lemmas relating to the correctness of the Distance Estimation and the Gradient Estimation steps, we make use of the following assumption:
\begin{assumption}
    \label{as:exact}
    For the bucket means, $\bm{Z} = (Z_1, \dots, Z_k)$, we have:
    \begin{equation*}
        \forall v \in \mb{R}^{d}, \norm{v} = 1\;\Rightarrow \abs*{\{i: \inp{Z_i - \mu}{v} \geq \rad\}} \leq 0.05k
    \end{equation*}
\end{assumption}
The assumption is a formalization of the insight of (\cite{lugosi2017sub}), which shows that along all directions, $v$, most of the bucket means are within a small radius of the true mean, $\mu$, with high probability\footnote{This will be made precise in Corollary~\ref{cor:dconce}.}.

First, we prove that the \textbf{Distance Estimation} step defined in Algorithm~\ref{alg:deste} is correct.

\begin{lemma}
  \label{lem:deste}
Under Assumption~\ref{as:exact}, for all $t \in \{0, \dots, T\}$ in the running of Algorihm~\ref{alg:meste}, $\dt{t}$ satisfies:

  \begin{equation*}
    \big\vert \dt{t}-\norm{\xt{t} - \mu}\big\vert  \leq \rade.
  \end{equation*}
\end{lemma}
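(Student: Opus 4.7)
The plan is to prove Lemma~\ref{lem:deste} by establishing the two one-sided bounds $\dt{t} \leq \norm{\xt{t}-\mu} + \rade$ and $\dt{t} \geq \norm{\xt{t}-\mu} - \rade$ separately. In both directions the only tool beyond Assumption~\ref{as:exact} is Cauchy--Schwarz, applied to the decomposition $\inp{v}{Z_i - \xt{t}} = \inp{v}{Z_i - \mu} + \inp{v}{\mu - \xt{t}}$. Note that $\rad$ and $\rade$ are identical quantities, so the concentration radius provided by Assumption~\ref{as:exact} is exactly the slack we are allowed.

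For the upper bound, suppose for contradiction that $\dt{t} > \norm{\xt{t}-\mu} + \rade$. By definition of Algorithm~\ref{alg:deste}, there is a unit vector $v$ and a subset $S \subseteq [k]$ with $|S| \geq 0.9 k$ such that $\inp{v}{Z_i - \xt{t}} \geq \dt{t}$ for every $i \in S$ (obtained from the optimal $(b,v)$ by taking $S = \{i : b_i = 1\}$). Cauchy--Schwarz yields $\inp{v}{\mu - \xt{t}} \leq \norm{\xt{t}-\mu}$, hence for each $i \in S$,
\[
\inp{v}{Z_i - \mu} \geq \dt{t} - \norm{\xt{t}-\mu} > \rade.
\]
This exhibits a unit vector along which more than $0.05 k$ bucket means satisfy $\inp{Z_i - \mu}{v} \geq \rad$, contradicting Assumption~\ref{as:exact}.

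For the lower bound, assume $\xt{t} \neq \mu$ (otherwise the claim follows trivially from the upper bound) and set $v^\star = (\mu - \xt{t})/\norm{\mu-\xt{t}}$. Applying Assumption~\ref{as:exact} to the unit vector $-v^\star$ shows that at most $0.05 k$ indices $i$ satisfy $\inp{v^\star}{Z_i - \mu} \leq -\rade$; equivalently, at least $0.95 k$ indices lie in the set
\[
S^\star = \lbrb{i : \inp{v^\star}{Z_i - \xt{t}} \geq \norm{\xt{t}-\mu} - \rade}.
\]
Consequently, for every $r < \norm{\xt{t}-\mu} - \rade$ the pair $(v^\star, \mathbf{1}_{S^\star})$ is feasible for \ref{eq:mte} with objective value $\geq 0.95 k \geq 0.9 k$, so by the definition of $\dt{t}$ as the largest such $r$ we obtain $\dt{t} \geq \norm{\xt{t}-\mu} - \rade$.

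The main (and only) subtlety is verifying that the argmax defining $\dt{t}$ is actually attained, which follows because the value of \ref{eq:mte} is monotone non-increasing in $r$ and is $\geq 0.9k$ at $r = 0$ (by picking any direction $v$ with positive inner product against at least half of the $Z_i - \xt{t}$). Beyond this, the statement is a deterministic consequence of Assumption~\ref{as:exact}; all probabilistic content is isolated in the forthcoming Corollary~\ref{cor:dconce} establishing that assumption. Combining the two one-sided bounds gives $|\dt{t} - \norm{\xt{t}-\mu}| \leq \rade$, as required.
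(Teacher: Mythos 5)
Your proof is correct and follows essentially the same route as the paper: the lower bound by exhibiting the unit vector toward $\mu$ as a feasible witness (using Assumption~\ref{as:exact} applied to $-v^\star$), and the upper bound by contradiction via the decomposition $\inp{Z_i-\mu}{v}=\inp{Z_i-\xt{t}}{v}+\inp{\xt{t}-\mu}{v}$ and Cauchy--Schwarz. The only blemish is your parenthetical justification that the value of \ref{eq:mte} at $r=0$ is at least $0.9k$ (a direction correlated with ``at least half'' of the $Z_i-\xt{t}$ gives only $0.5k$), but this attainment issue is a side remark that the paper's own proof also leaves implicit, so it does not affect the substance of the argument.
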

\begin{proof}
  Let $r^* = \rade$. We first prove the lower bound $\norm{\xt{t} - \mu}-r^*\leq \dt{t}$. We may assume that $\norm{\xt{t} - \mu} > r^*$, as the alternate case is trivially true. For $r = \norm{\xt{t} - \mu} - r^*$, we can simply pick the vector $v = \dd$ where $\dd$ is the unit vector in the direction of $\mu - \xt{t}$. Under Assumption~\ref{as:exact}, we have that for at least $0.95k$ points:
    \begin{equation*}
      \inp{Z_i - \xt{t}}{v} = \inp{Z_i - \mu}{v} + \inp{\mu - \xt{t}}{v} \geq \norm{\xt{t} - \mu} - r^* = r.
    \end{equation*}
  This implies the lower bound holds in the case where $\norm{\xt{t} - \mu} > r$.

For the upper bound $\dt{t}\leq \norm{\xt{t} - \mu}+r^* $, suppose, for the sake of contradiction, there is a value of $r >\norm{\xt{t} - \mu}+r^*$ for which the optimal value of $\mte(\xt{t},r,\bm{Z})$ is greater than $0.9k$. Let $v$ be the solution of $\mte(\xt{t},r,\bm{Z})$. This means that for $0.9k$ of the $Z_i$, we have:

  \begin{equation*}
    \inp{Z_i - \mu}{v} = \inp{Z_i - \xt{t}}{v} + \inp{\xt{t} - \mu}{v} \geq r - \norm{\xt{t} - \mu} > r^*.
  \end{equation*}
  This contradicts Assumption~\ref{as:exact} and proves the upper bound.
\end{proof}
Next, we prove the correctness of the \textbf{Gradient Estimation} step from Algorithm~\ref{alg:geste}.
\begin{lemma}
  \label{lem:geste}
  In the running of Algorithm~\ref{alg:meste}, let us assume $\xt{t}$ satisfies:
  \begin{equation}\label{eq:assummu}
    \norm{\mu - \xt{t}} \geq \radae,
  \end{equation}
and let $\dd$ denote the unit vector in the direction of $\mu - \xt{t}$. Then, under Assumption~\ref{as:exact}, we have that:
  \begin{equation*}
    \inp{\vt{t}}{\dd} \geq \frac{1}{2}.
  \end{equation*}
\end{lemma}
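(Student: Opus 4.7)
The plan is to combine the lower-bound half of Lemma~\ref{lem:deste} with Assumption~\ref{as:exact}, applied to the very direction $g_t$ returned by the MTE solver, and then use a counting argument to exhibit an index $i$ for which both inequalities hold simultaneously; the desired inner-product bound will then fall out by subtraction and Cauchy--Schwarz.

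Concretely, I would set $r^\ast = \rade$ and first invoke Lemma~\ref{lem:deste} in the direction that matters here, namely the lower bound, to obtain $d_t \geq \norm{x_t - \mu} - r^\ast$. By the definition of Algorithm~\ref{alg:deste}, $\mte(x_t, d_t, \bm{Z})$ has optimal value at least $0.9k$, so the solution $(b, g_t)$ extracted by Algorithm~\ref{alg:geste} satisfies $\norm{g_t}=1$ and
\begin{equation*}
\inp{g_t}{Z_i - x_t} \geq d_t \geq \norm{x_t - \mu} - r^\ast
\end{equation*}
for at least $0.9k$ indices $i$. Next, I would apply Assumption~\ref{as:exact} with $v = g_t$: it yields that at most $0.05k$ indices $i$ satisfy $\inp{g_t}{Z_i - \mu} \geq r^\ast$, hence at least $0.95k$ indices satisfy $\inp{g_t}{Z_i - \mu} \leq r^\ast$.

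A union-bound on the complements gives at least $0.85k > 0$ indices $i$ on which both inequalities hold. For any such $i$, writing $Z_i - x_t = (Z_i - \mu) + (\mu - x_t)$ and subtracting,
\begin{equation*}
\inp{g_t}{\mu - x_t} = \inp{g_t}{Z_i - x_t} - \inp{g_t}{Z_i - \mu} \geq \norm{x_t - \mu} - 2 r^\ast.
\end{equation*}
Since $\mu - x_t = \norm{\mu - x_t}\,\dd$, this is exactly $\inp{g_t}{\dd} \geq 1 - 2 r^\ast / \norm{\mu - x_t}$. Finally, the hypothesis~\eqref{eq:assummu} states $\norm{\mu - x_t} \geq \radae = 4 r^\ast$, so $2 r^\ast / \norm{\mu - x_t} \leq 1/2$ and we conclude $\inp{g_t}{\dd} \geq 1/2$.

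There is no real obstacle here: the argument is a one-shot pigeonhole intersection of the two constraint sets, and the constants $300$ and $1200$ in $\rade$ and $\radae$ are designed precisely so that the $2 r^\ast$ slack coming from applying Assumption~\ref{as:exact} on the ``wrong'' side gets absorbed. The only subtle point to state clearly is that the $v$ used in Assumption~\ref{as:exact} is not fixed in advance but is the data-dependent direction $g_t$ returned by MTE; this is fine because Assumption~\ref{as:exact} is a uniform-over-$v$ statement, which is how Corollary~\ref{cor:dconce} will later justify it with high probability.
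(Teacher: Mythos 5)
Your proposal is correct and follows essentially the same route as the paper's proof: the pigeonhole intersection of the $0.9k$ indices certified by the \mte{} solution at $r=d_t$ with the $0.95k$ indices given by Assumption~\ref{as:exact} applied to $g_t$ (valid precisely because the assumption is uniform over unit $v$), combined with the lower bound of Lemma~\ref{lem:deste} and the factor-$4$ relation between $\radae$ and $\rade$, yielding $\inp{g_t}{\dd} \geq 1 - 2r^\ast/\norm{\mu - x_t} \geq 1/2$. The only cosmetic difference is that you subtract the two inner products directly while the paper chains the inequalities through $d_t$; also note that no Cauchy--Schwarz is actually needed, only the identity $\mu - x_t = \norm{\mu - x_t}\dd$.
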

\begin{proof}
  Let $r^* = \rade$.  We have, from the definition of $\dt{t}$, that for $0.9k$ of the $Z_i$,     \mbox{$\inp{Z_i - \xt{t}}{\vt{t}} \geq \dt{t}$}.
  We also have, under Assumption~\ref{as:exact}, that
  $
    \inp{Z_i - \mu}{\vt{t}} \leq r^*
  $ for $0.95k$ of the $Z_i$.
  From the pigeonhole principle, there exists a $Z_j$ which satisfies both those inequalities. Therefore, for that $Z_j$, the lower bound from Lemma~\ref{lem:deste} implies
  \begin{equation*}
    \norm{\mu - \xt{t}} - r^* \leq \dt{t} \leq \inp{Z_j - \xt{t}}{\vt{t}} = \inp{Z_j - \mu}{\vt{t}} + \inp{\mu - \xt{t}}{\vt{t}} \leq r^* + \norm{\mu - \xt{t}}\inp{\dd}{\vt{t}}.
  \end{equation*}
  By rearranging the above inequality and using the assumption on $\norm{\mu - \xt{t}}$ in \eq{assummu},
  we get the required conclusion.
\end{proof}
To control the probability that Assumption~\ref{as:exact} holds, we assume the correctness of the following corollary of Lemma~\ref{lem:mnConc}, formalizing the insight of (\cite{lugosi2017sub}):
\begin{corollary}
\label{cor:dconce}
Let $\bm{Y} = (Y_1, \dots, Y_k) \in \mb{R}^{k \times d}$ be $k$ i.i.d.~random vectors with mean $\mu$ and covariance $\Lambda$. Furthermore, assume $k \geq \nbu$. Then we have for all $v\in \mb{R^d}$ such that $\Vert v\Vert=1$:
\begin{equation*}
 \abs*{\{i: \inp{Y_i - \mu}{v} \geq \rady\}} \leq 0.05k{}
\end{equation*}
{}
with probability at least $1 - \delta$.

\end{corollary}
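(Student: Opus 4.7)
My plan is to derive this corollary directly from Lemma~\ref{lem:mnConc}, which presumably establishes the full Lugosi--Mendelson concentration bound for projections of i.i.d.\ vectors onto arbitrary unit directions. In its standard form, such a lemma guarantees that, with probability at least $1-\delta$, uniformly over unit $v$, the number of indices $i$ satisfying $\inp{Y_i - \mu}{v} \geq C\lprp{\sqrt{\Tr \Lambda /k} + \sqrt{\norm{\Lambda}\log(1/\delta)/k}}$ is bounded by a small constant fraction of $k$. The role of the corollary is to repackage this bound by absorbing the $\log(1/\delta)$ factor into the spectral term, yielding the cleaner radius $\rady$, and to calibrate the fraction at $0.05$.

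The main step is thus to invoke Lemma~\ref{lem:mnConc} and exploit the assumption $k \geq \nbu$. This hypothesis, through the inequality
\begin{equation*}
  \sqrt{\frac{\norm{\Lambda}\log(1/\delta)}{k}} \leq \sqrt{\frac{\norm{\Lambda}}{3200}},
\end{equation*}
dominates the confidence-dependent spectral tail by a fixed multiple of $\sqrt{\norm{\Lambda}}$ alone. Combining this with the leading dimensional term $\sqrt{\Tr \Lambda /k}$ produces a bound of the form $C'\lprp{\sqrt{\Tr\Lambda/k} + \sqrt{\norm{\Lambda}}}$, which matches $\rady$ once the universal constant is calibrated to $300$. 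The fraction $0.05k$ then follows from whatever natively appears in Lemma~\ref{lem:mnConc}, with any slack absorbed into the leading constants of $\nbu$ and $\rady$.

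The only delicate part I anticipate is constant tracking: one must verify that the universal constants produced by Lemma~\ref{lem:mnConc} are compatible with the specific choices $\nbu = 3200\log(1/\delta)$ and prefactor $300$ in $\rady$ so that the fraction of bad bucket means is at most $0.05$ simultaneously. Since the underlying concentration inequality is tight only up to universal constants, this reduces to routine bookkeeping rather than any new probabilistic argument, and no further ingredients beyond Lemma~\ref{lem:mnConc} should be required.
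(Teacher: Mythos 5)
Your proposal rests on a mischaracterization of Lemma~\ref{lem:mnConc}, and as a result the step that actually carries the content of the corollary is missing. Lemma~\ref{lem:mnConc} is not a statement about projections onto unit directions: it bounds the optimum of the semidefinite relaxation \ref{eq:mt} at the true mean, saying that with probability at least $1-\delta$ every feasible solution $X$ of $\mt(\mu,r,\bm{Y})$ with $r \geq \rady$ satisfies $\sum_{i=1}^k X_{b_i,b_i} \leq k/20$, provided $k \geq \nbu$. In particular its radius is already $\rady$ --- there is no $\sqrt{\norm{\Lambda}\log(1/\delta)/k}$ term to absorb, since the $\delta$-dependence enters only through the condition $k \geq \nbu$ and the bounded-difference step inside the lemma's own proof --- and its fraction is already exactly $1/20 = 0.05$. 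So the manipulation you present as the heart of the argument (absorbing the $\log(1/\delta)$ factor into the spectral term and recalibrating constants) is arithmetically fine but does no work here, and no constant bookkeeping is needed at all.

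What is needed, and what your proposal omits, is the reduction from the combinatorial statement to the SDP statement: fix a unit vector $v$, let $S = \{i : \inp{Y_i-\mu}{v} \geq \rady\}$, set $b_i = 1$ for $i \in S$ and $b_i = 0$ otherwise, and form the rank-one matrix $X = ww^\top$ with $w = (1, b_1, \dots, b_k, v_1, \dots, v_d)$. One checks directly that $X$ is feasible for $\mt(\mu, \rady, \bm{Y})$: indeed $X_{1,1}=1$, $X_{1,b_i} = b_i = b_i^2 = X_{b_i,b_i}$, $\sum_j X_{v_j,v_j} = \norm{v}^2 = 1$, the linear constraints read $b_i \inp{v}{Y_i-\mu} \geq b_i \rady$, which hold by the definition of $S$ (and trivially when $b_i = 0$), and $X \succcurlyeq 0$ since it is rank one. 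Its objective value is $\sum_i X_{b_i,b_i} = \abs{S}$, so on the single event of Lemma~\ref{lem:mnConc} we get $\abs{S} \leq k/20 = 0.05k$ simultaneously for every unit $v$, which is exactly Corollary~\ref{cor:dconce}. Without this observation --- that every integral solution of \ref{eq:mte} is feasible for the relaxation \ref{eq:mt} --- invoking Lemma~\ref{lem:mnConc} does not yield the corollary; a direct proof of the projection statement via the Lugosi--Mendelson route (bounded differences plus symmetrization and contraction) would also work, but that is not what you propose and would essentially re-prove the machinery behind Lemma~\ref{lem:mnConc}.
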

By instantiating Corollary~\ref{cor:dconce} with the $Y_i = Z_i$, we see that Assumption~\ref{as:exact} holds with high probability. 

Finally, we put the results of Lemma~\ref{lem:deste}, Lemma~\ref{lem:geste} and Corollary~\ref{cor:dconce} together to prove Theorem~\ref{thm:sgmeste}.

\begin{proof}[Proof of Theorem~\ref{thm:sgmeste}]
  Assume first that Assumption~\ref{as:exact} holds. Let $r^* = \radae$. To start with, let us define the set ${\mathcal{G} = \{x: \norm{x - \mu} \leq r^*\}}$. We prove the theorem in two cases:
  \begin{enumerate}
    \item[] \textbf{Case 1: } None of the iterates $x_t$ lie in $\mathcal{G}$. In this case, note that by Lemma~\ref{lem:deste} and the definition of $r^*$, we have:
    \begin{equation}
    \label{eq:dbe}
        \frac{3}{4} \norm{\xt{t} - \mu} \leq \dt{t} \leq \frac{5}{4} \norm{\xt{t} - \mu}.
    \end{equation}
    Moreover, we have by the definition of the update rule of $x_t$ in Algorithm~\ref{alg:meste}:
    \begin{align*}
      \norm{\xt{t + 1} - \mu}^2 &= \norm{\xt{t} - \mu}^2 + \frac{1}{2} \dt{t} \inp{\xt{t} - \mu}{\vt{t}} + \frac{\dt{t}^2}{16} \leq \norm{\xt{t} - \mu}^2 - \frac{\dt{t} \norm{\xt{t} - \mu}}{4} + \frac{\dt{t}^2}{16} \\
      &\leq \norm{\xt{t} - \mu}^2 - \frac{3}{16} \norm{\xt{t} - \mu}^2 + \frac{25}{256} \norm{\xt{t} - \mu}^2 \leq \frac{23}{25} \norm{\xt{t} - \mu}^2,
    \end{align*}
    where we have used Lemma~\ref{lem:geste} for the first inequality and the inequalities in \eq{dbe} for the second inequality. By iteratively applying the above inequality, we get the conclusion of the theorem in this case. 
    \item[] \textbf{Case 2: } At least one of the iterates $\xt{t}$ lies in $\mathcal{G}$. Therefore, we have from Lemma~\ref{lem:deste}:
    \begin{equation*}
      \dt{t} \leq \gpdbe.
    \end{equation*}
    We also have at the completion of the algorithm, from another application of Lemma~\ref{lem:deste}:
    \begin{equation*}
      \norm{\optx - \mu} - \rade \leq \optd \leq \dt{t} \leq  \gpdbe.
    \end{equation*}
      By re-arranging the above inequality, we get the desired result.
  \end{enumerate}
  By Corollary~\ref{cor:dconce}, Assumption~\ref{as:exact} holds with probability at least $1-\delta$ and therefore, the conclusions from Case 1 and Case 2 hold with probability $1-\delta$. 
\end{proof}
Bearing in mind that the polynomial optimization problem \ref{eq:mte} is non-convex, we consider a convex relaxation in the following section.  

\section{Efficient Algorithm for Mean Estimation}
\label{sec:efficient}
In this section, we define a semi-definite programming relaxation of the polynomial optimization problem \ref{eq:mte}. We then design new Distance Estimation and Gradient Estimation algorithms that use the tractable solutions to the relaxation instead of the original polynomial optimization problem. We then use these solutions to update our mean estimate along the same lines as those from Section~\ref{sec:intuition}, albeit with some added technical difficulty. Finally, we provide the analysis of the method and prove Theorem~\ref{thm:sgmest}.

\subsection{The Semi-Definite Relaxation of \ref{eq:mte}}
Here, we propose a semidefinite programming relaxation of \ref{eq:mte}, a variant of the Threshold-SDP from (\cite{hopkins2018sub}). We first define a semidefinite matrix $X \in \mb{R}^{(k + d + 1) \times (k + d + 1)}$ symbolically \newpage
\begin{algorithm}[H]
  \caption{Distance Estimation}
  \label{alg:dest}
  \begin{algorithmic}[1]
    \STATE \textbf{Input}: Data Points $\bm{Z} \in \mb{R}^{k\times d}$, Current point $x$
    \STATE $d^* = \argmax_{r > 0} \mt(x,r,\bm{Z}) \geq 0.9 k$
    \STATE \textbf{Return: } $d^*$
  \end{algorithmic}
\end{algorithm}
\vspace{-.5cm}
\begin{algorithm}[H]
  \caption{Gradient Estimation}
  \label{alg:gest}
  \begin{algorithmic}[1]
    \STATE \textbf{Input}: Data Points $\bm{Z} \in \mb{R}^{k\times d}$, Current point $x$
    \STATE $d^*$ = Distance Estimation$(\bm{Z}, x)$
    \STATE $(X, m) = \mt(x,d^*,\bm{Z})$
    \STATE $X_v = \text{Submatrix of $X$}$ corresponding to the indices $v_i$
    \STATE $g = \text{Top singular vector of $X_{v}$}$
    \STATE $\mathcal{H} = \{i: \inp{Z_i - x}{g} \geq 0\}$
    \IF {$\abs{\mathcal{H}} \geq 0.9k$}
      \STATE \textbf{Return: }$g$
    \ELSE
      \STATE \textbf{Return: }$-g$
    \ENDIF
  \end{algorithmic}
\end{algorithm}
\noindent indexed by $1$, the variables $b_i$ and $v_j$ and denote by the vector $v_{b_i} \coloneqq (X_{b_i, v_1}, \dots, X_{b_i, v_d})$:
\begin{gather*}
  \max \sum_{i = 1}^k X_{1, b_i} \\
  X_{1, b_i} = X_{b_i, b_i} \\
  X_{1,1} = 1 \\
  \sum_{j = 1}^d X_{v_j, v_j} = 1 \\
  \inp{v_{b_i}}{Z_i - x} \geq X_{b_i,b_i}r \ \forall i \in [k]\\
  X \succcurlyeq 0 \tag{\textbf{MT}} \label{eq:mt}
\end{gather*}
Similar to the polynomial optimization~\ref{eq:mte}, this optimization problem is also parameterized by a vector $x \in \mb{R}^d$, $r > 0$ and a matrix $\bm{Z} \in \mb{R}^{k \times d}$. We refer to solutions of this program as $(X, m) = \mt (x,r,\bm{Z})$ with $m$ denoting the optimal value and $X$ denoting the optimal solution.

The main contribution of our paper is in showing that the solutions to the relaxed optimization problem~\ref{eq:mte} can be used to improve the mean estimate similar to those of \ref{eq:mt}.

\subsection{Algorithm}
\label{sec:algorelax}
To efficiently estimate the mean, we instantiate Algorithm~\ref{alg:meste} to use solutions of \ref{eq:mt} instead of \ref{eq:mte}. The new Distance Estimation and Gradient Estimation procedures are stated in Algorithms~\ref{alg:dest} and~\ref{alg:gest}.

As opposed to the polynomial optimization problem, solutions to the relaxation may not necessarily return a single vector $v$ but rather a semidefinite matrix which corresponds to the relaxation of $v$. This matrix may not uniquely determine a direction of improvement. We, therefore, parse the solution to isolate a provably good direction of improvement and use this to iteratively improve our estimate. It is noteworthy that the singular value decomposition does not provide a sign direction. Thankfully the correct orientation is easily ascertained using the data points.

To analyze the runtime of Algorithm~\ref{alg:meste} with Algorithms~\ref{alg:dest} and ~\ref{alg:gest}, we first note that the semidefinite relaxation has $O(k^2 + d^2)$ variables. However, by projecting all the data down to a subspace containing the $k$ bucket means, we may effectively reduce the number of variables to $O(k^2)$ with an $O(k^2d)$ time pre-processing step. Therefore, we are now left with $O(k^2)$ variables. The runtime of interior point methods for solving semidefinite programs with $O(k^2)$ variables and $O(k)$ constraints is $O(k^{3.5})$ (\cite{alizadeh1995interior}). Furthermore, a single call of the Distance Estimation procedure can be efficiently implemented using $\widetilde{O} (1)$ rounds of binary search on the parameter $r$. Therefore, the total cost of a single call to Algorithm~\ref{alg:dest} is $\widetilde{O} (k^{3.5})$. Similarly, the total cost of a call to Algorithm~\ref{alg:gest} is $\widetilde{O} (k^{3.5})$. Since the cost of each iteration is dominated by a single call of Algorithm~\ref{alg:dest} and \ref{alg:gest}, the total cost per iteration is $\widetilde{O} (k^{3.5})$. Since, we only run $\widetilde{O}(1)$ iterations, the total cost of the Algorithm~\ref{alg:meste} instantiated with Algorithms~\ref{alg:dest} and ~\ref{alg:gest} is $\widetilde{O} (k^{3.5} + k^2d)$.

\subsection{Analysis}
We now prove Theorem~\ref{thm:sgmest}. We follow the same lines as the proof of Theorem~\ref{thm:sgmeste}, but  with the added technical difficulties arising from the use of the semi-definite relaxation. 
\begin{enumerate}
  \item \textbf{Distance Estimation:} We show that the Distance Estimation step in Algorithm~\ref{alg:dest} provides an accurate estimate of the distance of the current point from the mean. See Section~\ref{sec:disest}.
  \item \textbf{Gradient Estimation:} Next, we show that when $x$ is far away from the mean $\mu$, the vector $g$ output by Algorithm~\ref{alg:gest} is well aligned with the vector joining the current point $x$ to the mean $\mu$. See Section~\ref{sec:gradest}.
  \item \textbf{Gradient Descent:} Combining the previous two steps, we prove that we eventually converge to a good approximation to the mean. See Section~\ref{sec:smTh}.
\end{enumerate}
The following assumption is required to prove the correctness of the Distance Estimation and Gradient Estimation steps:
\begin{assumption}
    \label{as:relax}
    For the bucket means, $\bm{Z} = (Z_1, \dots, Z_k)$, let $\mathcal{S}_r$ denote the set of feasible solutions for $\mt (\mu, r, \bm{Z})$. Then, we have for all $r \geq \rad$,
    \begin{equation*}
        \max_{X \in \mathcal{S}_r} \sum_{i = 1}^k X_{b_i,b_i} \leq \frac{k}{20}.
    \end{equation*}
\end{assumption}
The above assumption is a strengthening of Assumption~\ref{as:exact} for the case where we use \ref{eq:mt} instead of \ref{eq:mte}. We use the following fact at several points in the subsequent analysis:
\begin{remark}
  \label{rem:a21}
  Note that Assumption~\ref{as:relax} implies Assumption~\ref{as:exact}.
\end{remark}

\subsubsection{Distance Estimation Step}
\label{sec:disest}
In this subsection, we analyze the Distance Estimation step from Algorithm~\ref{alg:dest}. We  show that an accurate estimate of the distance of the current point from the mean can be found. We begin by stating a lemma that shows that a feasible solution for $\mt(x,r,\bm{Z})$ can be converted to a feasible solution for $\mt(\mu,\rad,\bm{Z})$ with a reduction in optimal value. 
\begin{lemma}
  \label{lem:conv}
 Let us assume Assumption~\ref{as:relax}.  Let $X \in \mb{R}^{(k + d + 1) \times (k + d + 1)}$ be a positive semi-definite matrix, symbolically indexed by $1$ and the variables $b_i$ and $v_j$. Moreover, suppose that $X$ satisfies:
  \begin{equation*}
    X_{1,1} = 1, \quad X_{b_i, b_i} = X_{1, b_i}, \quad \sum_{j = 1}^d X_{v_j, v_j} = 1, \quad \sum_{i = 1}^k X_{b_i, b_i} \geq 0.9k.
  \end{equation*}
  Then, there is a set of at least $0.85k$ indices $\mathcal{T}$ such that for all $i \in \mathcal{T}$:
  \begin{equation*}
    \inp{Z_i - \mu}{v_{b_i}} < X_{b_i, b_i} \rad,
  \end{equation*}
  and a set of at least $k / 3$ indices $\mathcal{R}$ such that for all $j \in \mathcal{R}$, we have $X_{b_j, b_j} \geq 0.85$.
\end{lemma}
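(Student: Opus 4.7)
The lemma has two independent conclusions, and I would treat them separately. The bound on $|\mathcal{R}|$ is a direct counting argument, while the bound on $|\mathcal{T}|$ is the substantive part and is where Assumption~\ref{as:relax} enters. Before either argument, I would establish the simple pointwise bound $X_{b_i,b_i} \in [0,1]$: the $2 \times 2$ principal minor of the PSD matrix $X$ on indices $\{1, b_i\}$ yields $X_{1,b_i}^2 \leq X_{1,1} X_{b_i, b_i}$, and substituting $X_{1,b_i} = X_{b_i, b_i}$ and $X_{1,1} = 1$ forces $X_{b_i,b_i} \leq 1$ (and nonnegativity follows from PSD-ness of the diagonal).

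For $|\mathcal{R}| \geq k/3$, I would argue by contrapositive: if $|\mathcal{R}| < k/3$, then
\begin{equation*}
  \sum_{i=1}^k X_{b_i, b_i} \leq |\mathcal{R}| + 0.85 (k - |\mathcal{R}|) = 0.85 k + 0.15 |\mathcal{R}| < 0.85 k + 0.05 k = 0.9 k,
\end{equation*}
contradicting the hypothesis. This takes care of the second conclusion.

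The main work is the bound $|\mathcal{T}| \geq 0.85 k$. My plan is to convert $X$ into a feasible solution $X'$ of $\mt(\mu, \rad, \bm{Z})$ whose SDP objective is exactly $\sum_{i \in \mathcal{T}^c} X_{b_i, b_i}$, and then let Assumption~\ref{as:relax} upper-bound this quantity. Let $E = \{1\} \cup \{b_i : i \in \mathcal{T}^c\} \cup \{v_1, \dots, v_d\}$, and define $X'$ to agree with $X$ on $E \times E$ and to be zero elsewhere. Because $X'$ is a principal submatrix of $X$ zero-padded to the full index set, quadratic forms against $X'$ equal the corresponding restricted quadratic forms against $X$, so $X' \succcurlyeq 0$. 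The constraints $X'_{1,1} = 1$, $X'_{b_i, b_i} = X'_{1, b_i}$, and $\sum_j X'_{v_j, v_j} = 1$ transfer directly from $X$ (for indices $i \notin \mathcal{T}^c$ both sides of the Boolean-like constraint simply vanish). The inner-product constraint $\inp{v'_{b_i}}{Z_i - \mu} \geq X'_{b_i, b_i} \rad$ holds trivially for $i \notin \mathcal{T}^c$ since both sides are $0$, and holds by the very definition of $\mathcal{T}^c$ for $i \in \mathcal{T}^c$.

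With $X'$ feasible, Assumption~\ref{as:relax} yields $\sum_{i \in \mathcal{T}^c} X_{b_i, b_i} = \sum_i X'_{b_i, b_i} \leq k/20$. Subtracting from the hypothesis $\sum_i X_{b_i, b_i} \geq 0.9 k$ gives $\sum_{i \in \mathcal{T}} X_{b_i, b_i} \geq 0.85 k$, and using $X_{b_i, b_i} \leq 1$ from the preliminary step finishes the proof. I expect the main conceptual obstacle to be recognizing the zero-padding construction of $X'$ and verifying that this sliced-down matrix remains PSD while satisfying every SDP constraint centered at $\mu$ rather than at the original $x$; once that is in place, all remaining manipulations are counting estimates.
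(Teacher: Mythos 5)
Your proof is correct and follows essentially the same route as the paper: you zero out the rows and columns of $X$ at the violated indices to produce a feasible solution of $\mt(\mu, \rad, \bm{Z})$, invoke Assumption~\ref{as:relax} to bound $\sum_{i \in \mathcal{T}^c} X_{b_i,b_i}$ by $k/20$, and finish with the same counting arguments (using $X_{b_i,b_i} \leq 1$) for both $\mathcal{T}$ and $\mathcal{R}$. The only cosmetic differences are that you argue the $\mathcal{R}$ bound by contrapositive and explicitly justify $X_{b_i,b_i} \leq 1$ via the $2\times 2$ principal minor, which the paper uses implicitly.
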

\begin{proof}
  Let $r = \rad$. We prove the lemma by contradition. Firstly, note that $X$ is infeasible for $\mt(\mu, r, \bm{Z})$ as the optimal value for $\mt(\mu,r,\bm{Z})$ is less than $k / 20$ (Assumption~\ref{as:relax}). Note that the only constraints of $\mt(\mu,r,\bm{Z})$ that are violated by $X$ are constraints of the form:
  \begin{equation*}
    \inp{Z_i - \mu}{v_{b_i}} < X_{b_i, b_i} r.
  \end{equation*}
Now, let $\mathcal{T}$ denote the set of indices for which the above inequality is violated. We can convert $X$ to a feasible solution for $\mt(\mu, r, \bm{Z})$ by setting to $0$ the rows and columns corresponding to the indices in $\mathcal{T}$. Let $X^\prime$ be the matrix obtained by the above operation. We have from Assumption~\ref{as:relax}:
  \begin{equation*}
    0.05k \geq \sum_{i = 1}^k X^\prime_{b_i, b_i} = \sum_{i = 1}^k X_{b_i, b_i} - \sum_{i \in \mathcal{T}} X_{b_i, b_i} \geq 0.9k - \abs{\mathcal{T}},
  \end{equation*}
where the last inequality follows from the fact that $X_{b_i, b_i} \leq 1$. By rearranging the above inequality, we get the first claim of the lemma.

For the second claim, let $\mathcal{R}$ denote the set of indices $j$ satisfying $X_{b_j, b_j} \geq 0.85$. We have:
\begin{equation*}
    0.9k \leq \sum_{j = 1}^k X_{b_j, b_j} = \sum_{j \in \mathcal{R}} X_{b_j, b_j} + \sum_{j \notin \mathcal{R}} X_{b_j, b_j} \leq \abs{\mathcal{R}} + 0.85k - 0.85\abs{\mathcal{R}} \implies \frac{k}{3} \leq \abs{\mathcal{R}}.
  \end{equation*}
  This establishes the second claim of the lemma.
\end{proof}
The following  lemma shows that if the distance between the mean $\mu$ and a point $x$ is small then the estimate returned by Algorithm~\ref{alg:dest} is also small.
\begin{lemma}
\label{lem:destg}
  Suppose a point $x\in\mb{R}^d$ satisfies $\norm{x - \mu} \leq \radAlg$. Then, under Assumption~\ref{as:relax}, Algorithm~\ref{alg:dest} returns a value $d^\prime$ satisfying
  \begin{equation*}
    d^\prime \leq \radAlgG.
  \end{equation*}
\end{lemma}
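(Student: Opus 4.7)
The plan is to argue by contradiction: assume Algorithm~\ref{alg:dest} returns some $d' > \radAlgG$ and derive a violation of the hypothesis $\norm{x - \mu} \leq \radAlg$. By the definition of the argmax, $\mt(x, d', \bm{Z})$ admits a feasible matrix $X$ with $\sum_{i=1}^k X_{b_i,b_i} \geq 0.9k$. This $X$ fulfills every hypothesis of Lemma~\ref{lem:conv}, so I immediately get a set $\mathcal{T}$ of at least $0.85k$ indices with $\inp{Z_i - \mu}{v_{b_i}} < X_{b_i,b_i} \rad$ and a set $\mathcal{R}$ of at least $k/3$ indices with $X_{b_i,b_i} \geq 0.85$. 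Inclusion--exclusion gives $\abs{\mathcal{T} \cap \mathcal{R}} \geq 0.85k + k/3 - k > 0$, so I can fix some index $i$ in the intersection.

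The key algebraic ingredient is the bound $\norm{v_{b_i}} \leq \sqrt{X_{b_i,b_i}}$. I would extract this from positive semi-definiteness of $X$: for any unit vector $u \in \mb{R}^d$, the $2 \times 2$ principal minor of $X$ indexed by $b_i$ and the formal combination $\sum_j u_j v_j$ yields $(\sum_j u_j X_{b_i,v_j})^2 \leq X_{b_i,b_i}\, u^\top X_v u$, where $X_v$ denotes the $d \times d$ submatrix indexed by the $v_j$'s. The trace constraint $\sum_j X_{v_j,v_j} = 1$ forces $\norm{X_v}_{op} \leq 1$, and taking $u = v_{b_i}/\norm{v_{b_i}}$ then gives $\norm{v_{b_i}}^2 \leq X_{b_i,b_i}$.

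With these two ingredients I combine the feasibility constraint $\inp{v_{b_i}}{Z_i - x} \geq X_{b_i,b_i} d'$ with the decomposition $Z_i - x = (Z_i - \mu) + (\mu - x)$, the $\mathcal{T}$-estimate on the first summand, and Cauchy--Schwarz together with $\norm{v_{b_i}} \leq \sqrt{X_{b_i,b_i}}$ on the second:
\begin{equation*}
  X_{b_i,b_i}\, d' \leq \inp{Z_i - \mu}{v_{b_i}} + \inp{\mu - x}{v_{b_i}} < X_{b_i,b_i} \rad + \sqrt{X_{b_i,b_i}}\,\norm{\mu - x}.
\end{equation*}
Dividing by $X_{b_i,b_i} > 0$, invoking $X_{b_i,b_i} \geq 0.85$ from membership in $\mathcal{R}$, and substituting the hypothesis $\norm{\mu - x} \leq \radAlg$, I obtain $d' < \rad + \radAlg/\sqrt{0.85}$. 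A numerical check $300 + 6000/\sqrt{0.85} < 7500$ then contradicts the standing assumption $d' > \radAlgG$ and completes the proof.

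The main obstacle I expect is the PSD-based bound $\norm{v_{b_i}} \leq \sqrt{X_{b_i,b_i}}$; this is precisely what lets one convert the pseudo-vector produced by the SDP into an honest Euclidean direction so that Cauchy--Schwarz is available. Once that bound is in place, the argument is essentially the upper-bound half of Lemma~\ref{lem:deste} reweighted by the pseudo-indicator values $X_{b_i,b_i}$, with a modest slack that Lemma~\ref{lem:conv} supplies through the set $\mathcal{R}$ on which those pseudo-indicators are bounded away from zero.
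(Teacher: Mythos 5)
Your proof is correct and follows the same structure as the paper's: both apply Lemma~\ref{lem:conv} to obtain $\mathcal{T}$ and $\mathcal{R}$, pick an index in $\mathcal{T}\cap\mathcal{R}$, combine the feasibility constraint with the $\mathcal{T}$-bound and Cauchy--Schwarz, and close with a numerical contradiction. The only difference is cosmetic: you keep $X_{b_i,b_i}$ throughout and use the sharper bound $\norm{v_{b_i}}\leq\sqrt{X_{b_i,b_i}}$, whereas the paper lower-bounds the left side by $0.85\,r'$ and upper-bounds the right side using the cruder $\norm{v_{b_j}}\leq 1$ and $X_{b_j,b_j}\leq 1$.
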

\begin{proof}
Let $r^\prime = \radAlgG$ and $r = \rad$. Suppose that the optimal value of $\mt(x,r^\prime,\bm{Z})$ is greater than $0.9k$ and let its optimal solution be $X$. Let $\mathcal{R}$ and $\mathcal{T}$ denote the two sets whose existence is guaranteed by Lemma~\ref{lem:conv}. From, the cardinalities of $\mathcal{R}$ and $\mathcal{T}$, we see that their intersection is not empty. For $j \in \mathcal{R} \cap \mathcal{T}$, we have:
  \begin{equation*}
    0.85r^\prime \leq \inp{Z_j - x}{v_{b_j}} = \inp{Z_j - \mu}{v_{b_j}} + \inp{\mu - x}{v_{b_j}} < r + \norm{x - \mu},
  \end{equation*}
where the first inequality follows from the fact that $j \in \mathcal{R}$ and the fact that $X$ is feasible for $\mt(x, r^\prime, \bm{Z})$ and the last inequality follows from the inclusion of $j$ in $\mathcal{T}$ and Cauchy-Schwarz.

By plugging in the bounds on $r^\prime$ and $r$, we get:
  \begin{equation*}
    \norm{x - \mu} > 6075 \lprp{\sqrt{\Tr \Sigma/n} + \sqrt{k\norm{\Sigma} / n}}.
  \end{equation*}
  This contradicts the assumption on $\norm{x - \mu}$ and concludes the proof of the lemma.
\end{proof}
The next  lemma shows that the distance between the mean $\mu$ and a point $x$ can be accurately estimated as long as $x$ is sufficiently far from $\mu$.
\begin{lemma}
\label{lem:dest}
  Suppose a point $x$ satisfies $\tilde{d} = \norm{x - \mu} \geq \radAlg$. Then, under Assumption~\ref{as:relax}, Algorithm~\ref{alg:dest} returns a value $d^\prime$ satisfying:
  \begin{equation*}
    0.95\tilde{d} \leq d^\prime \leq 1.25\tilde{d}.
  \end{equation*}
\end{lemma}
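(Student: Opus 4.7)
The plan is to establish the two bounds separately, mirroring the warm-up proof of Lemma~\ref{lem:deste}. The key numerical fact driving both directions is that the hypothesis $\tilde d \geq \radAlg$ dominates the noise scale: $\tilde d \geq 20 \rad$, so whenever $\rad$ appears on the right-hand side of an inequality it can be absorbed into a small fraction of $\tilde d$. By Remark~\ref{rem:a21}, Assumption~\ref{as:relax} implies Assumption~\ref{as:exact}, which I would use for the lower bound, while Lemma~\ref{lem:conv} is the main tool for the upper bound.

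For the lower bound $d' \geq 0.95 \tilde d$, I would exhibit an explicit feasible solution to $\mt(x, 0.95\tilde d, \bm Z)$ of objective value at least $0.9k$, so that the argmax in Algorithm~\ref{alg:dest} picks a parameter at least this large. Let $\dd$ be the unit vector pointing from $x$ to $\mu$. Applying Assumption~\ref{as:exact} in direction $-\dd$ shows that at least $0.95k$ indices satisfy $\inp{Z_i - \mu}{\dd} \geq -\rad$. For these indices, setting $b_i = 1$ (and $b_i = 0$ otherwise) together with $v = \dd$ gives
\begin{equation*}
  \inp{Z_i - x}{v} = \inp{Z_i - \mu}{\dd} + \tilde d \geq \tilde d - \rad \geq 0.95\, \tilde d,
\end{equation*}
using $\tilde d \geq 20 \rad$. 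Lifting $u = (1, b_1, \dots, b_k, v_1, \dots, v_d)$ to its rank-one outer product $X = uu^\top$ produces a PSD matrix satisfying every constraint of $\mt$, with objective value $\sum_i b_i \geq 0.95k \geq 0.9k$.

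For the upper bound $d' \leq 1.25 \tilde d$, I would argue by contradiction. Suppose some $r > 1.25 \tilde d$ admits an $\mt(x, r, \bm Z)$-feasible $X$ of value at least $0.9k$. Then $X$ meets the hypotheses of Lemma~\ref{lem:conv}, so the guaranteed sets $\mathcal T$ (size $\geq 0.85k$) and $\mathcal R$ (size $\geq k/3$) must intersect. Picking $j \in \mathcal R \cap \mathcal T$, feasibility yields $\inp{Z_j - x}{v_{b_j}} \geq X_{b_j, b_j}\, r \geq 0.85\, r$, while splitting the inner product through $\mu$ and invoking Lemma~\ref{lem:conv} at $j \in \mathcal T$ together with Cauchy--Schwarz gives
\begin{equation*}
  \inp{Z_j - x}{v_{b_j}} = \inp{Z_j - \mu}{v_{b_j}} + \inp{\mu - x}{v_{b_j}} \leq X_{b_j, b_j}\, \rad + \norm{v_{b_j}}\, \tilde d \leq \rad + \tilde d.
\end{equation*}
Combining gives $0.85\, r \leq \rad + \tilde d$; plugging in $r > 1.25 \tilde d$ forces $\tilde d < 16\, \rad$, contradicting $\tilde d \geq 20\, \rad$.

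The one technical point, and the main obstacle, is controlling $\norm{v_{b_j}}$ in the Cauchy--Schwarz step above: the vector $v_{b_j} := (X_{b_j, v_1}, \dots, X_{b_j, v_d})$ is not directly constrained, but the PSD property of $X$ applied to each $2\times 2$ principal minor gives $X_{b_j, v_i}^2 \leq X_{b_j, b_j}\, X_{v_i, v_i}$; summing over $i$ and using the normalization $\sum_i X_{v_i, v_i} = 1$ together with $X_{b_j, b_j} \leq X_{1,1} = 1$ (from $X_{1, b_j}^2 \leq X_{1,1}\, X_{b_j, b_j}$ and $X_{1, b_j} = X_{b_j, b_j}$) yields $\norm{v_{b_j}} \leq 1$. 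Once this SDP bookkeeping is in hand, the remainder is the same two-sided geometric estimate as in Lemma~\ref{lem:deste}, with Lemma~\ref{lem:conv} replacing Assumption~\ref{as:exact}.
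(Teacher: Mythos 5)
Your proof is correct and follows essentially the same route as the paper: an explicit integral (rank-one) witness in the direction of $\mu - x$ plus Assumption~\ref{as:exact} for the lower bound, and a contradiction via the sets $\mathcal{R}$ and $\mathcal{T}$ of Lemma~\ref{lem:conv}, Cauchy--Schwarz with $\norm{v_{b_j}} \leq 1$, and the gap $\tilde d \geq 20\cdot\rad$ for the upper bound. The only cosmetic differences are that you treat all $r > 1.25\tilde d$ directly instead of invoking the monotonicity lemma, and you spell out the PSD bookkeeping giving $\norm{v_{b_j}} \leq 1$, which the paper defers to the appendix (Lemma~\ref{lem:exp}).
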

\begin{proof}
  Let us define the direction $\dd$ to be the unit vector in the direction of $x - \mu$. From Assumption~\ref{as:exact} (which is implied by Assumption~\ref{as:relax}), the number of $Z_i$ satisfying $\inp{Z_i - \mu}{\dd} \geq \rad$ is less than $k / 20$. Therefore, we have that for at least $0.95k$ points:
  \begin{equation*}
    \inp{Z_i - x}{-\dd} = \inp{x - \mu + \mu - Z_i}{\dd} = \norm{x - \mu} - \rad \geq 0.95\tilde{d}.
  \end{equation*}
Along with the monotonicity\footnote{See Lemma~\ref{lem:mtmon} in Appendix~\ref{sec:aux}.} of $\mt(x,r,\bm{Z})$ in $r$, this implies the lower bound.

For the upper bound, we show that the optimal value of $\mt(x,1.25\tilde{d},\bm{Z})$ is less than $0.9k$. For the sake of contradiction, suppose that this optimal value is greater than $0.9k$. Let $X$ be a feasible solution of $\mt(x, 1.25\tilde{d}, \bm{Z})$ that achieves $0.9k$. Let $\mathcal{R}$ and $\mathcal{T}$ be the two sets whose existence is guaranteed by Lemma~\ref{lem:conv} and $j$ be an element in their intersection. We have for $j$:
  \begin{align*}
    0.85 (1.25\tilde{d}) &\leq X_{b_j,b_j} 1.25\tilde{d}\leq \inp{Z_j - x}{v_{b_j}} = \inp{Z_j - \mu}{v_{b_j}} \!+\! \inp{\mu - x}{v_{b_j}}  \\
    &< X_{b_j, b_j} \rad \!+\! \norm{\mu \!-\! x} 
    \!=\! X_{b_j, b_j}\rad \!+\! \tilde{d},
  \end{align*}
  where the first inequality follows from the inclusion of $j$ in $\mathcal{R}$ and the last inequality follows from the inclusion of $j$ in $\mathcal{T}$ and Cauchy-Schwarz.
By re-arranging the above inequality, we get:
  \begin{equation*}
    X_{b_j, b_j} > (1.0625\tilde{d} - \tilde{d})\Big(\rad\Big)^{-1} > 1,
  \end{equation*}
which is a contradiction. Therefore, we get from the monotonicity of $\mt(x,r,\bm{Z})$ (see Lemma~\ref{lem:mtmon}), that $d^\prime \leq 1.25\tilde{d}$ and this concludes the proof of the lemma.
\end{proof}

\subsubsection{Gradient Estimation Step}\label{sec:gradest}
In this section, we analyze the Gradient Estimation step of the algorithm. We show that an approximate gradient can be found as long as the current point $x$ is not too close to the mean $\mu$. 
The following lemma shows that we obtain a non-trivial estimate of the gradient in Algorithm~\ref{alg:gest}.
\begin{lemma}
  \label{lem:gest}
  Suppose a point $x$ satisfies $\norm{x - \mu} \geq \radAlg$ and let $\dd$ be the unit vector along $\mu - x$. Then under Assumption~\ref{as:relax}, Algorithm~\ref{alg:gest} returns a vector $g$ satisfying:
  \begin{equation*}
    \inp{g}{\dd} \geq \frac{1}{15}.
  \end{equation*}
\end{lemma}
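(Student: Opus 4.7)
The plan is to follow the structure of Lemma~\ref{lem:geste} while carefully extracting a descent direction from the semidefinite solution. First I would invoke Lemma~\ref{lem:dest} to obtain $0.95\tilde{d}\leq d^{*}\leq 1.25\tilde{d}$ for $\tilde{d}=\norm{x-\mu}$, so that $\mt(x,d^{*},\bm{Z})$ admits an optimal $X$ with $\sum_{i}X_{b_i,b_i}\geq 0.9k$. Applying Lemma~\ref{lem:conv} to this $X$ produces sets $\mathcal{T}$ (of size at least $0.85k$) and $\mathcal{R}$ (of size at least $k/3$), whose intersection contains a constant fraction of indices $i$ satisfying both $X_{b_i,b_i}\geq 0.85$ and $\inp{Z_i-\mu}{v_{b_i}}<X_{b_i,b_i}\rad$.

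For any $i$ in this intersection I would combine the feasibility constraint $\inp{v_{b_i}}{Z_i-x}\geq X_{b_i,b_i}d^{*}$ with the $\mathcal{T}$-bound to get $\inp{v_{b_i}}{\mu-x}\geq X_{b_i,b_i}(d^{*}-\rad)$, and then use $\tilde{d}\geq 20\rad$ together with $d^{*}\geq 0.95\tilde{d}$ to conclude $\inp{\dd}{v_{b_i}}\geq 0.9\,X_{b_i,b_i}\geq 0.765$. Passing to the Gram-vector representation $X_{a,b}=\inp{u_a}{u_b}$ and setting $h_{\dd}=\sum_{j}\dd_{j}u_{v_j}$, this becomes $\inp{u_{b_i}}{h_{\dd}}\geq 0.9\,X_{b_i,b_i}$; since $\norm{u_{b_i}}^{2}=X_{b_i,b_i}$, Cauchy--Schwarz forces $\norm{h_{\dd}}^{2}\geq (0.9)^{2}\cdot 0.85$, that is, $\dd^{T}X_{v}\dd\geq 0.68$.

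The main obstacle is converting this Rayleigh quotient bound into alignment of $\dd$ with the top eigenvector $g$ of $X_{v}$ itself. The crucial ingredient is the SDP normalization $\Tr(X_{v})=\sum_{j}X_{v_j,v_j}=1$ which, together with $X_{v}\succeq 0$, forces $\lambda_{1}(X_{v})+\lambda_{2}(X_{v})\leq 1$. Using the spectral expansion one gets $\dd^{T}X_{v}\dd\leq \lambda_{1}\alpha^{2}+\lambda_{2}(1-\alpha^{2})$ with $\alpha=\inp{g}{\dd}$; substituting $\lambda_{2}\leq 1-\lambda_{1}$ and rearranging against $\lambda_{1}\geq \dd^{T}X_{v}\dd$ yields $\alpha^{2}\geq \dd^{T}X_{v}\dd$, so that $|\alpha|\geq 0.8$.

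Finally I would resolve the sign ambiguity of the SVD through the orientation check at the end of Algorithm~\ref{alg:gest}. Assumption~\ref{as:exact} (implied by Assumption~\ref{as:relax}) applied to both $\pm g$ yields $|\inp{Z_i-\mu}{g}|<\rad$ on at least $0.9k$ indices; combined with $\tilde{d}\geq 20\rad$ and $|\alpha|\geq 0.8$, the quantities $\inp{Z_i-x}{g}=\inp{Z_i-\mu}{g}+\tilde{d}\,\alpha$ have the sign of $\alpha$ throughout this subset. The check $|\mathcal{H}|\geq 0.9k$ therefore returns $g$ when $\alpha>0$ and $-g$ when $\alpha<0$, so in either case the returned vector has inner product at least $|\alpha|\geq 0.8$ with $\dd$, comfortably exceeding $1/15$.
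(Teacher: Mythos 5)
Your argument is correct and shares the paper's skeleton: both proofs use Lemma~\ref{lem:dest} to pin down $d^*$, Lemma~\ref{lem:conv} to extract an index in $\mathcal{T}\cap\mathcal{R}$, the feasibility constraint at that index to show that the relaxed direction correlates with $\dd$, and the $\abs{\mathcal{H}}\geq 0.9k$ test plus Assumption~\ref{as:exact} to fix the sign. Where you genuinely differ is the step converting this correlation into alignment with the top singular vector $g$ of $X_v$: the paper factorizes $X=UU^\top$, deduces $\norm{U_v\dd}\geq 0.75$, and spends the Frobenius budget $\norm{U_v}_F=1$ to get $\norm{U_v\projp_\dd}_F\leq 0.67$ and hence $\abs{\inp{g}{\dd}}\geq 0.08$; you instead argue spectrally on $X_v$ itself, from $\dd^\top X_v\dd\geq 0.68$ together with $X_v\succcurlyeq 0$ and $\Tr X_v=1$ (so $\lambda_1+\lambda_2\leq 1$), which avoids the explicit Gram factorization and yields a noticeably better constant. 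One caveat: your assertion $\alpha^2\geq \dd^\top X_v\dd$ (with $\alpha=\inp{g}{\dd}$) is not a one-line rearrangement and is in fact false when $\dd^\top X_v\dd\leq 1/2$ (take $\lambda_1=\lambda_2=1/2$ and $\dd$ equal to the second eigenvector, where $\alpha=0$); it does hold here, but only via a short case split on the sign of $2\alpha^2-1$, using $\dd^\top X_v\dd\geq 0.68>1/2$. Alternatively, the immediate chain $\dd^\top X_v\dd\leq \lambda_1\alpha^2+\lambda_2(1-\alpha^2)\leq \alpha^2+(1-\lambda_1)\leq \alpha^2+1-\dd^\top X_v\dd$ gives $\alpha^2\geq 2\,\dd^\top X_v\dd-1\geq 0.36$, i.e.\ $\abs{\alpha}\geq 0.6$, which still far exceeds $1/15$ and leaves your sign-fixing argument (two-sided use of Assumption~\ref{as:exact} plus $\norm{x-\mu}\geq 20\cdot\rad$) intact.
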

\begin{proof}
In the running of Algorithm~\ref{alg:gest}, let $X$ denote the solution of $\mt(x,d^*,\bm{Z})$. We begin by factorizing the solution $X$ into $UU^\top$ with the rows of $U$ denoted by $u_1$, $u_{b_1}, \dots, u_{b_k}$ and $u_{v_1}, \dots, u_{v_d}$. We also define the matrix $U_v \!=\! (u_{v_1}, \dots, u_{v_d})$ in $\mb{R}^{(k + d + 1) \times d}$. From the constraints in \ref{eq:mt}, we~have:
  \begin{equation*}
    X_{b_i, b_i} = \norm{u_{b_i}}^2 \leq 1 \implies \norm{u_{b_i}} \leq 1,\quad \sum_{j = 1}^d X_{v_j, v_j} = \sum_{j = 1}^d \norm{u_{v_j}}^2 = \norm{U_v}_F^2 = 1 \implies \norm{U_v}_F = 1.
  \end{equation*}
Let $\mathcal{R}$ and $\mathcal{T}$ denote the sets defined in Lemma~\ref{lem:conv}. Let $j \in \mathcal{T} \cap \mathcal{R}$. By noting that $v_{b_j} = u_{b_j}^\top U_v$, we have for $j$:
  \begin{equation*}
     0.85d^* \leq \inp{Z_j - \mu}{v_{b_j}} + \inp{\mu - x}{v_{b_j}} \leq X_{b_j, b_j} \rad + u_{b_j}^\top U_v (\mu - x),
  \end{equation*}
where the first inequality follows from the inclusion of $j$ in $\mathcal{R}$ and the second from its inclusion in $\mathcal{T}$. We get by rearranging the above equation and using our bound on $d^*$ from Lemma~\ref{lem:dest}:
  \begin{equation}\label{eq:aboveeq}
     0.80 \norm{\mu - x} \leq 0.85d^* \leq X_{b_j, b_j} \rad + u_{b_j}^\top U_v (\mu - x).
  \end{equation}
By rearranging \eq{aboveeq}, using Cauchy-Schwarz, $\norm{u_{b_i}} \leq 1$ and the assumption on $\norm{x - \mu}$:
  \begin{equation*}
    \norm{U_v (\mu - x)} \geq u_{b_j}^\top U_v (\mu - x) \geq 0.75 \norm{\mu - x}.
  \end{equation*}
We finally get that:
  \begin{equation*}
    \norm{U_v \dd} \geq 0.75.
  \end{equation*}
  Now, we have:
  \begin{equation*}
    1 = \norm{U_v}_F^2 = \norm{U_v \proj_\dd}_F^2 + \norm{U_v \projp_\dd}_F^2 \geq \norm{U_v \projp_\dd}_F^2 + (0.75)^2 \implies \norm{U_v \projp_\dd}_F \leq 0.67.
  \end{equation*}
 Let $y$ be the top singular vector of $X_v$. Note that $X_v = U_v^\top U_v$ and $y$ is also the top right singular vector of $U_v$. We have that:
  \begin{equation*}
    0.75 \leq \norm{U_v y} \leq \norm{U_v\proj_\dd y} + \norm{U_v\projp_\dd y} \leq \norm{\proj_\dd y} + \norm{U_v\projp_\dd}_F \leq \norm{\proj_\dd y} + 0.67.
  \end{equation*}
 This means that we have:
  \begin{equation*}
    \abs{\inp{y}{\dd}} \geq \frac{1}{15}.
  \end{equation*}
Note that the algorithm returns either $y$ or $-y$. Firstly, consider the case where $\inp{y}{\dd} > 0$. From Assumption~\ref{as:exact} (implied by Assumption~\ref{as:relax}), we have for at least $0.95k$ points:
  \begin{equation*}
    \inp{Z_i - \mu}{y} \leq \rad.
  \end{equation*}
  Therefore, we have for $0.95k$ points:
  \begin{align*}
    \!\!\inp{Z_i - x}{y}\! &=\! \inp{Z_i - \mu}{y} + \inp{\mu - x}{y} \\
    &\geq\! - \rad +\frac{\radAlg}{15} > 0.
  \end{align*}
This means that in the case where $\inp{y}{\dd} > 0$, we return $y$ which satisfies $\inp{\mu - x}{y} > 0$. This implies the lemma in this case. The case where $\inp{y}{\dd} < 0$ is similar with $-y$ used instead of $y$. This concludes the proof of the lemma.
\end{proof}

\subsubsection{Gradient Descent Step}
\label{sec:smTh}

The following lemma guarantees that Assumption~\ref{as:relax} holds with high probability and is used analogously to Corollary~\ref{cor:dconce} in the proof of Theorem~\ref{thm:sgmeste}:
\begin{lemma}
  \label{lem:mnConc}
  Let $\bm{Y} = (Y_1, \dots, Y_k) \in \mb{R}^{k \times d}$ be $k$ i.i.d.~random vectors with mean $\mu$ and covariance $\Lambda$ and let $\mathcal{S}$ denote the set of feasible solutions of $\mt(\mu, r, \bm{Y})$. Then, we have for $r \geq \rady$ and $k \geq \nbu$:
  \begin{equation*}
    \max_{X \in \mathcal{S}} \sum_{i = 1}^k X_{b_i, b_i} \leq \frac{k}{20},
  \end{equation*}
  with probability at least $1 - \delta$.
\end{lemma}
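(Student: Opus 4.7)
The plan is to reduce the SDP objective to a uniformly bounded empirical process indexed by PSD matrices, and then apply uniform concentration.

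First, I would factor any feasible $X \in \mathcal{S}$ as $X = UU^\top$ with rows $u_1$, $u_{b_i}$, $u_{v_j}$, as in the analyses of Lemma~\ref{lem:conv} and Lemma~\ref{lem:gest}. The constraints $X_{1,1} = 1$ and $X_{1, b_i} = X_{b_i, b_i}$, together with Cauchy-Schwarz, force $\norm{u_{b_i}} \leq 1$. Writing $U_v = (u_{v_1}, \dots, u_{v_d})$ with $\norm{U_v}_F = 1$, the inequality constraint of \ref{eq:mt} rewrites as $u_{b_i}^\top U_v(Y_i - \mu) \geq \norm{u_{b_i}}^2 r$; another Cauchy-Schwarz application yields $\norm{u_{b_i}}^2 \leq \norm{U_v(Y_i - \mu)}^2/r^2$. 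Letting $M = U_v^\top U_v$, we have $M \succeq 0$ with $\Tr(M) = 1$ and
\[ \sum_{i=1}^k X_{b_i, b_i} \leq \sum_{i=1}^k \min\lprp{1,\; (Y_i - \mu)^\top M (Y_i - \mu)/r^2}. \]
It thus suffices to bound the right-hand side by $k/20$, uniformly in such $M$, with probability at least $1-\delta$.

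Second, for each fixed $M$, each summand above lies in $[0, 1]$ and has expectation at most $\Tr(M\Lambda)/r^2 \leq \norm{\Lambda}/r^2 \leq 1/90000$ by the lower bound $r \geq 300\sqrt{\norm{\Lambda}}$. The pointwise expected sum is therefore at most $k/90000$, well below the target $k/20$.

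Third, I would promote this pointwise bound to a uniform one over unit-trace PSD matrices via Talagrand-type concentration for the supremum of a $[0,1]$-bounded empirical process. The expected supremum is controlled by symmetrization combined with the $1$-Lipschitzness of $x \mapsto \min(1, x/r^2)$, reducing matters to a matrix Rademacher average of the rank-one summands $(Y_i - \mu)(Y_i - \mu)^\top$. Together with the $[0,1]$-variance proxy and the hypothesis $k \geq 3200 \log 1/\delta$, Talagrand's inequality keeps the deviation term below $k/20$ with probability at least $1-\delta$.

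The main obstacle will be controlling this matrix Rademacher average, since under only a second-moment assumption the standard matrix Bernstein and Khintchine inequalities do not directly apply (the summands $(Y_i - \mu)(Y_i - \mu)^\top$ have operator norm $\norm{Y_i - \mu}^2$, which may be heavy-tailed). My plan for handling this is a truncation argument: split each $Y_i - \mu$ at a threshold on the scale of $r\sqrt{k}$, apply matrix Bernstein to the truncated contributions whose operator norms are now bounded, and absorb the discarded outliers into the analysis via Markov's inequality on the outlier count (which combines cleanly with the $[0,1]$-bounded summand structure to provide the remaining slack).
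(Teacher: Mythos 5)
Your first reduction is fine: factoring $X = UU^\top$ and applying Cauchy--Schwarz to the constraint $\inp{v_{b_i}}{Y_i - \mu} \geq X_{b_i,b_i} r$ does give $X_{b_i,b_i} \leq \min\lprp{1, (Y_i-\mu)^\top M (Y_i-\mu)/r^2}$ with $M = U_v^\top U_v \succeq 0$, $\Tr M = 1$, and the pointwise expectation bound $\norm{\Lambda}/r^2 \leq 1/90000$ is correct. The gap is in the third step. Symmetrization plus contraction strips the truncation $\min(1,\cdot)$ and leaves you with $\frac{1}{r^2}\,\mb{E}\,\lambda_{\max}\lprp{\sum_{i=1}^k \sigma_i (Y_i-\mu)(Y_i-\mu)^\top}$, and under a second-moment assumption alone this quantity is simply too large: take $Y-\mu = \sqrt{k \Tr \Lambda}\, u$ with probability $1/k$ and $0$ otherwise ($u$ a uniform unit vector, $d \gg k$). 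With constant probability some sample satisfies $\norm{Y_i - \mu}^2 = k\Tr\Lambda$ and carries a positive sign, so the matrix Rademacher average is at least of order $k \Tr \Lambda$, and since $r^2 \asymp \Tr\Lambda/k$ in this example, the bound you would need to prove is of order $k^2$, far above the target $k/20$ once $k \gtrsim 10^4$ --- exactly the regime $k = \nbu$. The truncation at $1$ is precisely what keeps the supremum small (in this example the truncated process is $O(1)$), so it cannot be discarded by contraction and recovered later. Your proposed repair does not close this: truncating the data at radius $r\sqrt{k}$ leaves the heavy points above untouched whenever $k \lesssim 9\cdot 10^4$, and even for the truncated part matrix Bernstein contributes a term $L\log(\cdot)/r^2$ with $L = kr^2$, i.e.\ of order $k\log d$ (or $k \log k$ after projection), which again exceeds $k/20$ and reintroduces a dimension factor absent from the lemma; Markov's inequality on the outlier count also only yields constant-probability control, not $1-\delta$ (this last point is fixable, the first two are not along this route).

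The paper avoids the quadratic process altogether, and that is the missing idea. It maps any feasible $X$ of $\mt(\mu,r,\bm Y)$ to a feasible solution of the SDP relaxation \ref{eq:tor} of the $2\!\to\!1$ norm of the centered data matrix, invokes Nesterov's rounding theorem (Theorem~\ref{thm:contr}) to pass from the SDP value to $\normtto{\bm Y - \bm 1 \mu^\top}$, and only then applies symmetrization and Ledoux--Talagrand contraction with $\phi = \abs{\cdot}$, which is $1$-Lipschitz in the \emph{linear} forms $\inp{v}{Y_i-\mu}$; the resulting Rademacher average is $\mb{E}\norm{\sum_i \sigma_i (Y_i-\mu)} \leq \sqrt{k\Tr\Lambda}$, which needs only two moments and is dimension-free. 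The high-probability step is then the bounded-differences inequality (via Lemma~\ref{lem:conc}), which matches your Talagrand-type step and is the unproblematic part of your plan. If you want to keep your $\min(1,\cdot)$ reduction, you would need a genuinely different argument for the expected supremum of the truncated quadratic process (this is a known, harder ``stability''-type statement), not contraction to the untruncated matrix Rademacher average.
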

The proof of the lemma is an application of standard empirical process theory and concentration inequalities (\cite{lugosi2017sub,hopkins2018sub}) and is proven in Appendix~\ref{sec:proofcon}.

The rest of the proof of Theorem~\ref{thm:sgmest} follows the same lines as that of Theorem~\ref{thm:sgmeste} and is postponed to Appendix~\ref{sec:proofsmth}. 
\section{Conclusion}
In this paper, we proposed a computationally efficient estimator for the mean of a random vector which obtains the statistically optimal performance. This estimator has a significantly faster runtime together with a simpler analysis than previous works. Our algorithm is based on a descent method, where a current estimate of the mean is iteratively improved. 

Considering the extension to M-estimation procedures~(\cite{brownlees2015, HsuSab16,lugosi2018risk}) is a promising direction for further research, with as first step, the particular example of linear regression with heavy tailed noise and covariates~(\cite{audibert2011}).

\bibliography{refs}

\newcommand{\etalchar}[1]{$^{#1}$}
\begin{thebibliography}{CLM{\etalchar{+}}16}

\bibitem[AC11]{audibert2011}
J.-Y. Audibert and O.~Catoni.
\newblock Robust linear least squares regression.
\newblock {\em Ann. Statist.}, 39(5):2766--2794, 10 2011.

\bibitem[Ali95]{alizadeh1995interior}
F.~Alizadeh.
\newblock Interior point methods in semidefinite programming with applications
  to combinatorial optimization.
\newblock {\em SIAM journal on Optimization}, 5(1):13--51, 1995.

\bibitem[AMS99]{alon1999space}
N.~Alon, Y.~Matias, and M.~Szegedy.
\newblock The space complexity of approximating the frequency moments.
\newblock {\em Journal of Computer and System Sciences}, 58(1):137--147, 1999.

\bibitem[BJL15]{brownlees2015}
C.~Brownlees, E.~Joly, and G.~Lugosi.
\newblock Empirical risk minimization for heavy-tailed losses.
\newblock {\em Ann. Statist.}, 43(6):2507--2536, 12 2015.

\bibitem[BLM13]{boucheron2013concentration}
S.~Boucheron, G.~Lugosi, and P.~Massart.
\newblock {\em Concentration Inequalities: A Nonasymptotic Theory of
  Independence}.
\newblock Oxford university press, 2013.

\bibitem[Cat12]{catoni2012challenging}
O.~Catoni.
\newblock Challenging the empirical mean and empirical variance: a deviation
  study.
\newblock {\em Ann. Inst. Henri Poincar\'{e} Probab. Stat.}, 48(4):1148--1185,
  2012.

\bibitem[CG17a]{catoni2017dimension}
O.~Catoni and I.~Giulini.
\newblock Dimension-free {PAC}-{B}ayesian bounds for matrices, vectors, and
  linear least squares regression.
\newblock {\em arXiv preprint arXiv:1712.02747}, 2017.

\bibitem[CG17b]{catoni2018dimension}
O.~Catoni and I.~Giulini.
\newblock Dimension-free {PAC}-{B}ayesian bounds for the estimation of the mean
  of a random vector.
\newblock {\em NIPS 2017 workshop; (Almost) 50 shades of Bayesian learning:
  PAC-Bayesian trends and insights}, 2017.

\bibitem[CLM{\etalchar{+}}16]{cohen2016}
M.~B. Cohen, Y.~T. Lee, G.~Miller, J.~Pachocki, and A.~Sidford.
\newblock Geometric median in nearly linear time.
\newblock In {\em Proceedings of the Forty-eighth Annual ACM Symposium on
  Theory of Computing}, STOC '16, 2016.

\bibitem[DLLO16]{devroye2016sub}
L.~Devroye, M.~Lerasle, G.~Lugosi, and R.~I. Oliveira.
\newblock Sub-{G}aussian mean estimators.
\newblock {\em Ann. Statist.}, 44(6):2695--2725, 2016.

\bibitem[Hop18]{hopkins2018sub}
S.~B Hopkins.
\newblock Sub-{G}aussian mean estimation in polynomial time.
\newblock {\em arXiv preprint arXiv:1809.07425}, 2018.

\bibitem[HS16]{HsuSab16}
D.~Hsu and S.~Sabato.
\newblock Loss minimization and parameter estimation with heavy tails.
\newblock {\em J. Mach. Learn. Res.}, 17, 2016.

\bibitem[JLO17]{joly2017}
E.~Joly, G.~Lugosi, and R.~Oliveira.
\newblock On the estimation of the mean of a random vector.
\newblock {\em Electron. J. Statist.}, 11(1):440--451, 2017.

\bibitem[JVV86]{jerrum1986random}
M.~R. Jerrum, L.~G. Valiant, and V.~V. Vazirani.
\newblock Random generation of combinatorial structures from a uniform
  distribution.
\newblock {\em Theoretical Computer Science}, 43:169--188, 1986.

\bibitem[LM17]{lugosi2018risk}
G.~Lugosi and S.~Mendelson.
\newblock Risk minimization by median-of-means tournaments.
\newblock {\em Journal of the European Mathematical Society}, to appear, 2017.

\bibitem[LM19]{lugosi2017sub}
G.~Lugosi and S.~Mendelson.
\newblock Sub-{G}aussian estimators of the mean of a random vector.
\newblock {\em Ann. Statist.}, 47(2):783--794, 04 2019.

\bibitem[LO11]{lerasle2011robust}
M.~Lerasle and R.~Oliveira.
\newblock Robust empirical mean estimators.
\newblock {\em arXiv preprint arXiv:1112.3914}, 2011.

\bibitem[LT91]{ledoux1991probability}
M.~Ledoux and M.~Talagrand.
\newblock {\em Probability in Banach Spaces: Isoperimetry and Processes},
  volume~23.
\newblock Springer Science \& Business Media, 1991.

\bibitem[Min15]{Min18}
S.~Minsker.
\newblock Geometric median and robust estimation in {B}anach spaces.
\newblock {\em Bernoulli}, 21(4):2308--2335, 2015.

\bibitem[Nes98]{nesterov1998semidefinite}
Y.~Nesterov.
\newblock Semidefinite relaxation and nonconvex quadratic optimization.
\newblock {\em Optimization methods and software}, 9(1-3):141--160, 1998.

\bibitem[NY83]{NemYud83}
A.~S. Nemirovsky and D.~B. Yudin.
\newblock {\em {Problem Complexity and Method Efficiency in Optimization}}.
\newblock Wiley-Interscience Series in Discrete Mathematics. John Wiley \&\
  Sons, 1983.

\end{thebibliography}
\bibliographystyle{alpha}
\newpage
\appendix
\section{Auxiliary lemma}
\label{sec:aux}
\begin{lemma}
  \label{lem:mtmon}
  For any $\bm{Z} \in \mb{R}^{k \times d}$ and $x \in \mb{R}^d$, the optimal value of $\mt(x,r,\bm{Z})$ is monotonically non-increasing in $r$.
\end{lemma}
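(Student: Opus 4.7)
The plan is to prove monotonicity by a standard feasibility-inclusion argument: show that any feasible solution for a larger value of $r$ is also feasible for any smaller value, so the feasible set shrinks as $r$ increases and hence the max over it cannot increase.

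Concretely, fix $0 < r_1 \leq r_2$ and let $X^*$ be any feasible solution of $\mt(x, r_2, \bm{Z})$. The constraints $X_{1,1}=1$, $X_{1,b_i} = X_{b_i,b_i}$, $\sum_j X_{v_j, v_j} = 1$, and $X \succcurlyeq 0$ do not involve $r$, so they are satisfied by $X^*$ for $\mt(x, r_1, \bm{Z})$ as well. The only remaining constraints are the linear inequalities $\inp{v_{b_i}}{Z_i - x} \geq X_{b_i, b_i}\, r$ for $i \in [k]$. The key observation is that since $X^* \succcurlyeq 0$, its diagonal entries are non-negative, i.e.~$X^*_{b_i,b_i} \geq 0$. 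Combined with $r_1 \leq r_2$, this gives
\begin{equation*}
    X^*_{b_i, b_i}\, r_1 \;\leq\; X^*_{b_i, b_i}\, r_2 \;\leq\; \inp{v_{b_i}}{Z_i - x},
\end{equation*}
where the last inequality uses that $X^*$ is feasible for $\mt(x, r_2, \bm{Z})$. Hence $X^*$ is feasible for $\mt(x, r_1, \bm{Z})$.

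Since the feasible set and the objective do not otherwise depend on $r$, this inclusion of feasible sets yields that the optimal value of $\mt(x, r_1, \bm{Z})$ is at least the optimal value of $\mt(x, r_2, \bm{Z})$, proving monotonic non-increase. There is no real obstacle in this argument; the only point worth stating explicitly is that the non-negativity of $X^*_{b_i,b_i}$ (required to flip the scaling by $r$ in the correct direction) is a free consequence of the positive semidefiniteness constraint, and the remaining constraints are $r$-independent so no adjustment to $X^*$ is needed.
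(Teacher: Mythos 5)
Your proof is correct and follows essentially the same route as the paper, which also argues that any feasible solution for a larger $r$ remains feasible for a smaller $r$, so the optimal value cannot increase. Your only addition is to spell out that $X_{b_i,b_i}\geq 0$ (from positive semidefiniteness) is what lets the inequality scale correctly with $r$, a detail the paper treats as trivial.
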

\begin{proof}
  The lemma follows trivially from the fact that a feasible solution $X$ of $\mt(x,r,\bm{Z})$ is also a feasible solution for $\mt(x,r^\prime,\bm{Z})$ for $r^\prime \leq r$.
\end{proof}

\section{Proof of Lemma~\ref{lem:mnConc}}
\label{sec:proofcon}
We first show that the optimal value of the semi-definite program \ref{eq:mt} satisfies a bounded-difference condition with respect to the $Z_i$'s.

\begin{lemma}
  \label{lem:conc}
  Let $\bm{Y} = (Y_1, \dots, Y_k)$ be any set of $k$ vectors in $\mb{R}^d$. Now, let $\bm{Y}^\prime = (Y_1, \dots, Y_i^\prime, \dots, Y_k)$ be the same set of $k$ vectors with the $i^{th}$ vector replaced by $Y_i^\prime\in \mb{R}^d$. If $m$ and $m^\prime$ are the optimal values of $\mt(x,r,\bm{Y})$ and $\mt(x,r,\bm{Y}^\prime)$, we have:

  \begin{equation*}
    \abs{m - m^\prime} \leq 1
  \end{equation*}
\end{lemma}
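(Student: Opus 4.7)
The plan is to establish the bounded-difference inequality by a standard "modify the optimal SDP solution" argument: take an optimal $X$ for $\mt(x,r,\bm{Y})$ with objective value $m$, convert it into a feasible $X'$ for $\mt(x,r,\bm{Y}')$ whose objective is at least $m-1$, and then appeal to symmetry between $\bm{Y}$ and $\bm{Y}'$.

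First, I would observe that the only place where $\bm{Y}$ enters the constraints of $\mt$ is in the linear inequalities $\inp{v_{b_j}}{Y_j - x} \geq X_{b_j,b_j} r$, one per index $j$. So replacing $Y_i$ by $Y_i'$ leaves all constraints for $j \neq i$ unchanged, and only the single constraint indexed by $i$ can become violated. To repair this, I would let $X'$ be the matrix obtained from $X$ by zeroing out the row and column indexed by $b_i$. Since this is the same as conjugating $X$ with the diagonal projector onto the remaining indices, $X'$ remains positive semidefinite. The constraints $X'_{1,1} = 1$ and $\sum_j X'_{v_j,v_j} = 1$ are preserved (they don't touch the $b_i$ coordinate), the constraints $X'_{b_j,b_j} = X'_{1,b_j}$ survive for $j \neq i$ and hold trivially for $j = i$ (both sides are $0$), and the $i$-th linear constraint becomes $0 \geq 0$, which is satisfied.

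Next, I would bound the drop in the objective: the objective of $X$ exceeds that of $X'$ by exactly $X_{1,b_i}$, and I would show $X_{1,b_i} \leq 1$. This comes from the facts that $X_{1,b_i} = X_{b_i,b_i}$ and that the $2\times 2$ principal submatrix of $X$ on rows/columns $\{1, b_i\}$ is PSD, so $X_{1,b_i}^2 \leq X_{1,1} X_{b_i,b_i} = X_{b_i,b_i}$, hence $X_{b_i,b_i}^2 \leq X_{b_i,b_i}$, forcing $X_{b_i,b_i} \in [0,1]$. Therefore the objective of $X'$ is at least $m - 1$, giving $m' \geq m - 1$. Swapping the roles of $\bm{Y}$ and $\bm{Y}'$ yields $m \geq m' - 1$, and the lemma follows.

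This is essentially routine; the only mild subtlety is verifying that zeroing out a row and column of a PSD matrix preserves every constraint of \ref{eq:mt}, and extracting the bound $X_{b_i,b_i} \leq 1$ from the combination of the PSD condition with the linearization constraint $X_{b_i,b_i} = X_{1,b_i}$ rather than imposing it as an explicit constraint. Once that is done, the bounded-difference property of the optimal value follows immediately, which then feeds into the McDiarmid/empirical-process argument that completes the proof of Lemma~\ref{lem:mnConc}.
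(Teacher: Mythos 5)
Your proof is correct and follows essentially the same route as the paper: zero out the $b_i$ row and column to obtain a feasible solution for the perturbed program, bound the objective drop by $X_{b_i,b_i}\leq 1$ via the PSD $2\times 2$ principal minor together with the linearization constraint $X_{1,b_i}=X_{b_i,b_i}$, and conclude by symmetry. No gaps.
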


\begin{proof}
  Firstly, assume that $X$ is a feasible solution to $\mt(x,r,\bm{Y})$. Now, let us define $X^\prime$ as:

  \begin{equation*}
    X^\prime_{i,j} = \begin{cases}
                      X_{i,j} & \text{ if $i,j \neq b_i$} \\
                      0 &\text{otherwise}
                     \end{cases}
  \end{equation*}
  That is $X^\prime$ is equal to $X$ except with the row and column corresponding to $b_i$ being set to $0$. We see that $X^\prime$ forms a feasible solution to $\mt(x,r,\bm{Y}^\prime)$. Therefore, we have that:

  \begin{equation*}
    \sum_{j = 1}^k X_{b_j, b_j} = \sum_{j = 1, j\neq i}^k X^\prime_{b_j, b_j} + X_{b_i, b_i} \leq \sum_{j = 1, j\neq i}^k X^\prime_{b_j, b_j} + 1 \leq m^\prime + 1
  \end{equation*}
where the bound $X_{b_i,b_i} \leq 1$ follows from the fact that the $2\times 2$ sub-matrix of $X$ formed by the rows and columns indexed by $1$ and $b_i$ is positive semidefinite and the constraint that $X_{b_i, b_i} = X_{1, b_i}$. Since the above series of equalities holds for all feasible solutions $X$ of $\mt(x,r,\bm{Y})$, we get:

  \begin{equation*}
    m \leq m^\prime + 1.
  \end{equation*}
  Through a similar argument, we also conclude that $m^\prime \leq m + 1$. Putting the above two inequalities together, we get the required conclusion.
\end{proof}
For the next few lemmas, we are concerned with the case where $x = \mu$. Since we already know that the optimal SDP value satisfies the bounded differences condition, we need to verify that the expectation is small. As a first step towards this, we define the 2-to-1 norm of a matrix $M$.

\begin{definition}
  The 2-to-1 norm of $M \in \mb{R}^{n \times d}$ is defined as

  \begin{equation*}
    \normtto{M} = \max_{\substack{\norm{v} = 1 \\ \sigma_i \in \{\pm 1\}}} \sigma^\top M v = \max_{\norm{v} = 1} \norm{Mv}_1
  \end{equation*}
\end{definition}
We consider the classical semidefinite programming relaxation of the 2-to-1 norm. To start with, we will define a matrix $X \in \mb{R}^{(n + d + 1) \times (n + d + 1)}$ with the rows and columns indexed by $1$ and the elements $\sigma_i$ and $v_j$. The semidefinite programming relaxation is defined as follows:
\begin{gather*}
  \max \sum_{i, j} M_{i,j} X_{\sigma_i, v_j} \\
  X_{1,1} = 1\\
  \sum_{j = 1}^d X_{v_j,v_j} = 1 \\
  X_{\sigma_i,\sigma_i} = 1 \\
  X \succcurlyeq 0 \tag{TOR} \label{eq:tor}
\end{gather*}
We now state a theorem of Nesterov as stated in (\cite{hopkins2018sub}):
\begin{theorem}{(\cite{nesterov1998semidefinite})}
  \label{thm:contr}
  There is a constant $K_{2\rightarrow 1} = \sqrt{\pi/2} \leq 2$ such that the optimal value, $m$, of the semidefinite programming relaxation~\ref{eq:tor} satisfies:
  \begin{equation*}
    m \leq K_{2\rightarrow 1} \normtto{M}.
  \end{equation*}
\end{theorem}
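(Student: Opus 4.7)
The plan is to deduce Theorem~\ref{thm:contr} from the classical Gaussian rounding scheme of Nesterov. Let $X^*$ be an optimal solution of \ref{eq:tor} with objective value $m$. Since $X^* \succcurlyeq 0$, it admits a Gram factorization $X^*_{ij} = \inp{u_i}{u_j}$ with vectors indexed by $1$, $\sigma_1, \dots, \sigma_n$, $v_1, \dots, v_d$ living in some common Euclidean space. The constraints of \ref{eq:tor} translate into $\norm{u_1} = 1$, $\norm{u_{\sigma_i}} = 1$ for each $i \in [n]$, and $\sum_{j=1}^d \norm{u_{v_j}}^2 = 1$, while the objective becomes $m = \sum_{i,j} M_{i,j} \inp{u_{\sigma_i}}{u_{v_j}}$.

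Next, I would round these vectors back to a primal feasible point via a Gaussian projection. Sample $g \sim \mathcal{N}(0, I)$ in the ambient space and set $\tilde{\sigma}_i = \mathrm{sign}(\inp{g}{u_{\sigma_i}}) \in \{\pm 1\}$ together with $\tilde{v}_j = \inp{g}{u_{v_j}}$. Then $(\tilde{\sigma}, \tilde{v}/\norm{\tilde{v}})$ is a feasible pair for the 2-to-1 norm problem, so by its definition
\begin{equation*}
\sum_{i,j} M_{i,j} \tilde{\sigma}_i \tilde{v}_j \leq \norm{\tilde{v}} \cdot \normtto{M}.
\end{equation*}
The key Gaussian identity I would invoke is that for any unit vector $u$ and any $w$ in the same Euclidean space, $\mb{E}[\mathrm{sign}(\inp{g}{u}) \inp{g}{w}] = \sqrt{2/\pi}\,\inp{u}{w}$; this follows by decomposing $w$ into components parallel and perpendicular to $u$, using $\mb{E}\abs{\inp{g}{u}} = \sqrt{2/\pi}$, and observing that the perpendicular component contributes zero by independence of the two Gaussian coordinates. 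Applied with $u = u_{\sigma_i}$ and $w = u_{v_j}$ and weighted by $M_{i,j}$, this yields $\mb{E}[\sum_{i,j} M_{i,j} \tilde{\sigma}_i \tilde{v}_j] = \sqrt{2/\pi}\, m$.

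Finally, I would combine the two bounds. Taking expectations of the feasibility inequality and bounding $\mb{E}[\norm{\tilde{v}}] \leq \sqrt{\mb{E}[\norm{\tilde{v}}^2]} = \sqrt{\sum_j \norm{u_{v_j}}^2} = 1$ by Jensen's inequality produces $\sqrt{2/\pi}\, m \leq \normtto{M}$, which rearranges to the claimed bound $m \leq \sqrt{\pi/2}\,\normtto{M}$. The main technical subtlety is that the rounded vector $\tilde{v}$ has random norm rather than being a priori unit and appears multiplicatively with $\tilde{\sigma}$ in the objective; the trick is to decouple the normalization of $\tilde v$ (controlled by Jensen) from the inner-product structure (evaluated via the Gaussian identity), so that only the product of an expectation and a deterministic bound needs to be handled, rather than an awkward expectation of a ratio.
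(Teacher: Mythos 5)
Your argument is correct, but note that the paper itself offers no proof of Theorem~\ref{thm:contr}: it is quoted as a black-box result of Nesterov (as restated in \cite{hopkins2018sub}), so there is no internal proof to compare against. What you have written is the standard Gaussian-rounding (``little Grothendieck''-style) derivation of that cited fact, and it goes through: the Gram factorization correctly encodes the constraints ($\norm{u_{\sigma_i}}=1$, $\sum_j\norm{u_{v_j}}^2=1$), the identity $\mb{E}[\mathrm{sign}(\inp{g}{u})\inp{g}{w}]=\sqrt{2/\pi}\,\inp{u}{w}$ for unit $u$ is exactly the parallel/perpendicular decomposition you describe, and the decoupling step $\mb{E}[\norm{\tilde v}]\le(\mb{E}\norm{\tilde v}^2)^{1/2}=1$ is legitimate because $\normtto{M}\ge 0$, so multiplying the Jensen bound by $\normtto{M}$ preserves the inequality. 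Two trivial points worth a half-sentence each if you write this up: the pointwise bound $\sum_{i,j}M_{i,j}\tilde\sigma_i\tilde v_j\le\norm{\tilde v}\,\normtto{M}$ also holds (with both sides zero) on the measure-zero event $\tilde v=0$, and the row/column of $X$ indexed by $1$ plays no role since it does not enter the objective, so discarding it in the rounding is harmless. In short, the paper buys brevity by citing Nesterov; your proof makes the constant $\sqrt{\pi/2}$ self-contained at the cost of a page, and both yield the same statement used downstream in Lemma~\ref{lem:exp}.
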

In the next step, we will bound the expected 2-to-1 norm of the random matrix $Z$. To do this, we begin by stating the famous Ledoux-Talagrand Contraction Theorem (\cite{ledoux1991probability}).
\begin{theorem}
  \label{thm:ledtal}
  Let $X_1, \dots, X_n \in \mb{R}^d$ be i.i.d.~random vectors, $\mathcal{F}$ be a class of real-valued functions on $\mb{R}^d$ and $\sigma_i, \dots, \sigma_n$ be independent Rademacher random variables. If $\phi: \mb{R} \rightarrow \mb{R}$ is an $L$-Lipschitz function with $\phi(0) = 0$, then:

  \begin{equation*}
    \mathbb{E} \sup_{f \in \mathcal{F}} \sum_{i = 1}^n \sigma_i \phi(f(X_i)) \leq L\cdot \mb{E} \sup_{f \in \mathcal{F}} \sum_{i = 1}^{n} \sigma_i f(X_i).
  \end{equation*}
\end{theorem}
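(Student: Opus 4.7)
The plan is to reduce the $n$-variable inequality to a one-step contraction for a single Rademacher variable and then peel off the $\sigma_i$'s one at a time. Conditioning on $X_1,\dots,X_n$ and using Fubini, it suffices to prove the statement with fixed points $x_1,\dots,x_n \in \mb{R}^d$ in place of the $X_i$'s; the outer expectation over the data then passes through the inequality by monotonicity.

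The key lemma is: for any fixed $x \in \mb{R}^d$ and any functional $h:\mathcal{F}\to\mb{R}$,
\begin{equation*}
  \mb{E}_\sigma \sup_{f \in \mathcal{F}} \bigl[h(f) + \sigma \phi(f(x))\bigr] \leq L \cdot \mb{E}_\sigma \sup_{f \in \mathcal{F}} \bigl[h(f) + \sigma f(x)\bigr],
\end{equation*}
where $\sigma$ is a single Rademacher variable. To prove this, expand both expectations over $\sigma \in \{\pm 1\}$ and decouple the two outcomes into an independent pair of maximizers $(f_1, f_2)$: the LHS equals $\tfrac{1}{2} \sup_{f_1, f_2} \bigl[h(f_1) + h(f_2) + \phi(f_1(x)) - \phi(f_2(x))\bigr]$ and the RHS equals $\tfrac{L}{2} \sup_{f_1, f_2} \bigl[h(f_1) + h(f_2) + f_1(x) - f_2(x)\bigr]$. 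For each unordered pair $\{f_1, f_2\}$, maximizing over the two orderings converts the signed differences to absolute values, so the LHS supremum collapses to $\tfrac{1}{2}\sup_{\{f_1,f_2\}} [h(f_1) + h(f_2) + |\phi(f_1(x)) - \phi(f_2(x))|]$ and the RHS to $\tfrac{L}{2}\sup_{\{f_1,f_2\}} [h(f_1) + h(f_2) + |f_1(x) - f_2(x)|]$. The Lipschitz hypothesis $|\phi(a) - \phi(b)| \leq L|a-b|$ then yields the inequality termwise, hence for the suprema.

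With the one-step lemma in hand, the theorem follows by iteration. For each $k = 1, \dots, n$ in turn, condition on $\{\sigma_j\}_{j \neq k}$ and set $h_k(f) = \sum_{i < k} L \sigma_i f(x_i) + \sum_{i > k} \sigma_i \phi(f(x_i))$, where the terms with $i < k$ have already been converted in previous rounds. Applying the lemma conditionally (with $x = x_k$) swaps $\sigma_k \phi(f(x_k))$ for $L \sigma_k f(x_k)$ inside the outer supremum while preserving the inequality; integrating back over the remaining Rademacher variables carries the bound through by monotonicity of expectation. After $n$ rounds every $\phi$ has been replaced by $L \cdot \mathrm{id}$, yielding the stated bound. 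The hypothesis $\phi(0) = 0$ is not strictly needed for this $\sup$-form argument but is required for related statements in which an absolute value sits outside the supremum.

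The main obstacle is the symmetrization step inside the one-step lemma: the pointwise inequality $\phi(a) - \phi(b) \leq L(a - b)$ can fail, as only its absolute-value version is universally valid. One must exploit that the decoupled supremum runs over both orderings of the pair $(f_1, f_2)$ to absorb the sign without paying an extra factor. Once this matching is made explicit, everything else is routine Fubini plus a finite induction on $n$.
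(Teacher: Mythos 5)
The paper itself does not prove this statement: it is the classical Ledoux--Talagrand contraction principle, quoted from the literature, so your argument has to stand on its own. Your overall route --- condition on the data, decouple a single Rademacher variable into a supremum over ordered pairs, use the symmetry of the pair to pass to absolute values, apply $\abs{\phi(a)-\phi(b)}\le L\abs{a-b}$ termwise, and then peel off the $\sigma_i$ one at a time by conditioning --- is exactly the standard proof of this theorem, and your remark that $\phi(0)=0$ is not needed for this sup-form (only for versions with an absolute value outside the supremum) is correct.

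There is, however, a genuine misstatement in your key one-step lemma: the factor $L$ must multiply only the contracted coordinate, i.e.\ the correct claim is $\mathbb{E}_\sigma\sup_{f}\bigl[h(f)+\sigma\phi(f(x))\bigr]\le \mathbb{E}_\sigma\sup_{f}\bigl[h(f)+L\,\sigma f(x)\bigr]$, not $\le L\cdot\mathbb{E}_\sigma\sup_{f}\bigl[h(f)+\sigma f(x)\bigr]$. As you state it, the inequality is false in general: take $\mathcal{F}=\{f\}$ with $f(x)=0$, $\phi\equiv 0$ and $h(f)=-1$; the left side equals $-1$ while the right side equals $-L$, which is strictly smaller whenever $L>1$. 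Correspondingly, your ``termwise'' step after decoupling compares $h(f_1)+h(f_2)+\abs{\phi(f_1(x))-\phi(f_2(x))}$ against $L\bigl(h(f_1)+h(f_2)\bigr)+L\abs{f_1(x)-f_2(x)}$, and the Lipschitz hypothesis says nothing about the term $L\bigl(h(f_1)+h(f_2)\bigr)$, so the claimed termwise domination does not hold. The repair is immediate: the same decoupling-and-reordering argument proves the corrected lemma verbatim, and your induction step already uses it in the corrected form (swapping $\sigma_k\phi(f(x_k))$ for $L\sigma_k f(x_k)$ inside the supremum, with $h_k$ untouched), after which $n$ rounds replace every $\phi$ by $L\cdot\mathrm{id}$ and give the stated bound.
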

We are now ready to bound the expected 2-to-1 norm of the random matrix $Z$.
\begin{lemma}
  \label{lem:tto}
  Let $\bm{Y} = (Y_1, \dots, Y_n) \in \mb{R}^{n\times d}$ be a set of $n$ i.i.d.~random vectors such that $\mb{E} [Y_i] = 0$ and $\mb{E} [Y_i Y_i^\top] = \Lambda$. Then, we have:
  \begin{equation*}
    \mb{E} \normtto{\bm{Y}} \leq 2 \sqrt{n\Tr \Lambda} + n \norm{\Lambda}^{1/2}.
  \end{equation*}
\end{lemma}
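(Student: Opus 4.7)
The plan is to write the $2\!\to\!1$ norm in its variational form and apply the classical symmetrization + contraction recipe. Observe first that
\[
\normtto{\bm{Y}} \;=\; \max_{\norm{v}=1} \sum_{i=1}^n \abs{\inp{Y_i}{v}},
\]
since the inner maximization over the signs $\sigma_i$ simply realizes the absolute value of each term. I would then split each summand into its mean and its centered part:
\[
\sum_{i=1}^n \abs{\inp{Y_i}{v}} \;=\; \sum_{i=1}^n \lprp{\abs{\inp{Y_i}{v}} - \mb{E}\abs{\inp{Y_i}{v}}} \;+\; \sum_{i=1}^n \mb{E}\abs{\inp{Y_i}{v}}.
\]

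For the deterministic ``mean'' term, Cauchy--Schwarz gives $\mb{E}\abs{\inp{Y_i}{v}} \le \sqrt{\mb{E}\inp{Y_i}{v}^2} = \sqrt{v^\top \Lambda v} \le \norm{\Lambda}^{1/2}$, so its supremum over $\norm{v}=1$ is at most $n\norm{\Lambda}^{1/2}$, which is the second term in the claimed bound. For the centered term, I would apply the standard symmetrization inequality to introduce independent Rademacher variables $\sigma_1,\dots,\sigma_n$:
\[
\mb{E}\sup_{\norm{v}=1}\sum_{i=1}^n\lprp{\abs{\inp{Y_i}{v}} - \mb{E}\abs{\inp{Y_i}{v}}} \;\le\; 2\,\mb{E}\sup_{\norm{v}=1}\sum_{i=1}^n \sigma_i\abs{\inp{Y_i}{v}}.
\]
Then I would apply the Ledoux--Talagrand contraction inequality (Theorem~\ref{thm:ledtal}) with $\phi(t)=\abs{t}$, which is $1$-Lipschitz and satisfies $\phi(0)=0$, to remove the absolute values at the cost of a factor of $1$, leaving
\[
\mb{E}\sup_{\norm{v}=1}\sum_{i=1}^n \sigma_i\abs{\inp{Y_i}{v}} \;\le\; \mb{E}\sup_{\norm{v}=1}\sum_{i=1}^n \sigma_i\inp{Y_i}{v} \;=\; \mb{E}\,\Big\lVert \sum_{i=1}^n \sigma_i Y_i \Big\rVert.
\]

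Finally I would bound this Rademacher average by Jensen's inequality followed by a direct second-moment computation: using independence of the $\sigma_i$ and the $Y_i$,
\[
\mb{E}\,\Big\lVert \sum_{i=1}^n \sigma_i Y_i \Big\rVert \;\le\; \sqrt{\mb{E}\,\Big\lVert \sum_{i=1}^n \sigma_i Y_i \Big\rVert^2} \;=\; \sqrt{\sum_{i=1}^n \mb{E}\norm{Y_i}^2} \;=\; \sqrt{n\,\Tr \Lambda},
\]
since the cross terms vanish and $\mb{E}\norm{Y_i}^2 = \Tr \Lambda$. Combining the two pieces with the factor of $2$ from symmetrization produces the stated bound $2\sqrt{n\Tr\Lambda} + n\norm{\Lambda}^{1/2}$. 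I do not foresee a real obstacle here; the only subtle point is checking that the contraction step is applied with the correct Lipschitz constant and anchoring at $\phi(0)=0$, and that symmetrization is used to bound the uniform deviation (not the raw supremum) so that the Rademacher average has the right scaling.
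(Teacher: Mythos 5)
Your proposal is correct and follows essentially the same route as the paper: centering via the term $\mb{E}\abs{\inp{v}{Y_i}}$ bounded by $\norm{\Lambda}^{1/2}$, symmetrization of the centered process (giving the factor $2$), Ledoux--Talagrand contraction with $\phi(t)=\abs{t}$ to drop the absolute values, and Jensen plus the second-moment identity $\mb{E}\norm{\sum_i \sigma_i Y_i}^2 = n\Tr\Lambda$. No gaps.
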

\begin{proof}
  Denoting by $Y$ and $Y_i^\prime$ random vectors that are independently and identically distributed as $Y_i$ and by $\sigma_i$ independent Rademacher random variables, we have: 
  \begin{align*}
    \mb{E} [\normtto{\bm{Y}}] &= \mb{E} \lsrs{\max_{\norm{v} = 1}\sum_{i = 1}^n \abs{\inp{Y_i}{v}}} = \mb{E} \lsrs{\max_{\norm{v} = 1}\sum_{i = 1}^n \abs{\inp{Y_i}{v}} + \mb{E} \abs{\inp{v}{Y_i}} - \mb{E} \abs{\inp{v}{Y_i}}}\\
    &\leq \mb{E} \lsrs{\max_{\norm{v} = 1} \sum_{i = 1}^n \abs{\inp{Y_i}{v}} - \mb{E} \abs{\inp{Y_i^\prime}{v}}} + n \max_{\norm{v} = 1}\mb{E} [\abs{\inp{v}{Y}}] \\
    &\leq \mb{E} \lsrs{\max_{\norm{v} = 1} \sum_{i = 1}^n \sigma_i (\abs{\inp{Y_i}{v}} - \abs{\inp{Y_i^\prime}{v}})} + n \max_{\norm{v} = 1} \mb{E} \lsrs{\abs{\inp{v}{Y}}}.
  \end{align*}
  Now, we have for the second term:
  \begin{equation*}
    \max_{\norm{v} = 1}\mb{E} [\abs{\inp{v}{Y}}] \leq \max_{\norm{v} = 1} \sqrt{\mb{E} \inp{v}{Y}^2} \leq \norm{\Lambda}^{1/2}.
  \end{equation*}
  For the first term, we get via a standard symmetrization argument:
  \begin{align*}
    \mb{E} \lsrs{\max_{\norm{v} = 1} \sum_{i = 1}^n \sigma_i (\abs{\inp{Y_i}{v}} - \abs{\inp{Y_i^\prime}{v}})} & \leq \mb{E} \lsrs{\max_{\norm{v} = 1} \sum_{i = 1}^n \sigma_i \abs{\inp{Y_i}{v}}} + \mb{E}\lsrs{\max_{\norm{v} = 1}\sum_{i = 1}^n-\sigma_i\abs{\inp{Y_i^\prime}{v}}} \\
    &= 2 \mb{E} \lsrs{\max_{\norm{v} = 1} \sum_{i = 1}^n \sigma_i\abs{\inp{v}{Y_i}}} \leq 2 \mb{E} \lsrs{\max_{\norm{v} = 1} \sum_{i = 1}^n \sigma_i\inp{v}{Y_i}} \\
    &= 2 \mb{E} \lsrs{\norm*{\sum_{i = 1}^n \sigma_i Y_i}} \leq 2 \lprp{\mb{E} \lsrs{\norm*{\sum_{i = 1}^n \sigma_i Y_i}^2}}^{1/2} \\
    &= 2 \lprp{\mb{E} \sum_{1 \leq i,j \leq n} \sigma_i \sigma_j \inp{Y_i}{Y_j}}^{1/2} = 2\sqrt{n\Tr \Lambda},
  \end{align*}
  where the second inequality follows from the Ledoux-Talagrand Contraction Principle (Theorem~\ref{thm:ledtal})
  By putting the above two bounds together, we get the lemma.
\end{proof}
We now bound the expected value of $\mt(\mu,r,\bm{Y})$ by relating it to $\normtto{\bm{Y}}$.
\begin{lemma}
  \label{lem:exp}
  Let $\bm{Y} = (Y_1, \dots, Y_k) \in \mb{R}^{k\times d}$ be a collection of $k$ i.i.d.~random vectors with mean $\mu$ and covariance $\Lambda$. Now, denoting by $\mathcal{S}$ the set of feasible solutions for $\mt(\mu,r,\bm{Y})$, we have:
  \begin{equation*}
    \mb{E} \max_{x \in \mathcal{S}} \sum_{i = 1}^k X_{1, b_i} \leq \frac{1}{2r} \lprp{5 \sqrt{k \Tr \Lambda} + 2k \norm{\Lambda}^{1/2}}.
  \end{equation*}
\end{lemma}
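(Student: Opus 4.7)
The plan is to bound the SDP objective by a multiple of the $2\!\to\!1$ norm of the centered data matrix $M = Y - \mu \mathbf{1}^\top$, and then invoke Lemma~\ref{lem:tto} together with Nesterov's Theorem~\ref{thm:contr}. First, for any feasible $X \in \mathcal{S}$, I would factor $X = UU^\top$, exposing rows $u_1, u_{b_1}, \dots, u_{b_k}, u_{v_1}, \dots, u_{v_d} \in \mb{R}^N$. The SDP constraints translate cleanly into $\norm{u_1} = 1$; $c_i := X_{b_i,b_i} = \norm{u_{b_i}}^2 \in [0,1]$ (using $X_{1,b_i} = X_{b_i,b_i}$ and Cauchy--Schwarz on the PSD minor indexed by $1$ and $b_i$); $\sum_j \norm{u_{v_j}}^2 = 1$; and $v_{b_i} = V u_{b_i}$, where $V \in \mb{R}^{d \times N}$ stacks the rows $u_{v_j}^\top$.

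Second, I would normalize: set $\tilde{u}_{b_i} = u_{b_i}/\norm{u_{b_i}}$ (any unit vector when $c_i = 0$). The SDP constraint $c_i r \leq \inp{v_{b_i}}{Y_i - \mu}$, after dividing by $\sqrt{c_i}$, becomes $\sqrt{c_i}\, r \leq \inp{V \tilde{u}_{b_i}}{Y_i - \mu}$. The Gram matrix $\tilde{X}$ built from $u_1$, the $\tilde{u}_{b_i}$, and the $u_{v_j}$ is PSD by construction and meets every constraint of the 2-to-1 SDP \ref{eq:tor} applied to data $M$; in particular $\tilde{X}_{\sigma_i,\sigma_i} = 1$. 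Its TOR objective equals $\sum_i \inp{V\tilde{u}_{b_i}}{Y_i - \mu}$, which by the rescaled constraint dominates $r \sum_i \sqrt{c_i}$.

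Third, Theorem~\ref{thm:contr} bounds the TOR optimum by $K_{2\to 1}\normtto{M}$, so $r \sum_i \sqrt{c_i} \leq K_{2\to 1}\normtto{Y - \mu \mathbf{1}^\top}$. Since $c_i \in [0,1]$ forces $c_i \leq \sqrt{c_i}$, this upgrades to $r \sum_i c_i \leq K_{2\to 1}\normtto{Y - \mu\mathbf{1}^\top}$. Taking the maximum over $\mathcal{S}$ and then the expectation, and applying Lemma~\ref{lem:tto} to the i.i.d.\ centered vectors $Y_i - \mu$ (mean zero, covariance $\Lambda$), I arrive at
\begin{equation*}
\mb{E}\max_{X \in \mathcal{S}}\sum_i X_{b_i,b_i} \leq \frac{K_{2\to 1}}{r}\lprp{2\sqrt{k\Tr\Lambda} + k\norm{\Lambda}^{1/2}},
\end{equation*}
which, absorbing the constant $K_{2\to 1} = \sqrt{\pi/2}$ into the numerical factors, gives the claimed bound.

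The main obstacle I anticipate is verifying that the normalization step produces a genuinely TOR-feasible Gram matrix and that the rescaled inequality $\sqrt{c_i}\, r \leq \inp{V\tilde{u}_{b_i}}{Y_i - \mu}$ is a legitimate consequence of the MT SDP constraint (the $c_i = 0$ case must be handled separately but is trivial since the right-hand side is then nonnegative by the original constraint). Once these checks go through, the step $c_i \leq \sqrt{c_i}$ and the bookkeeping of constants via Lemma~\ref{lem:tto} are routine.
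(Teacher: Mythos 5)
Your route to TOR-feasibility is genuinely different from the paper's and conceptually cleaner: where the paper applies the affine $\{0,1\}\!\to\!\{\pm1\}$ change of variables at the Gram level (defining $W$ via $W_{\sigma_i,\cdot}=2X_{b_i,\cdot}-X_{1,\cdot}$, i.e.\ the Gram matrix of $u_1,\,2u_{b_i}-u_1,\,u_{v_j}$), you instead normalize the vectors $u_{b_i}$ to unit length. Both produce a TOR-feasible Gram matrix, and yours skips the paper's explicit PSD verification for $W$ since a renormalized Gram matrix is automatically PSD. That part of your argument is sound, modulo the $c_i=0$ caveat below.

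However, the constants you end up with do not actually match the stated bound, and this is not a rounding issue you can absorb. You pay the full $K_{2\to1}$ factor on the entire objective and then additionally lose via $c_i\leq\sqrt{c_i}$, arriving at $\tfrac{K_{2\to1}}{r}\bigl(2\sqrt{k\Tr\Lambda}+k\norm{\Lambda}^{1/2}\bigr)$, i.e.\ leading coefficients $\sqrt{2\pi}\approx 2.507$ and $\sqrt{\pi/2}\approx 1.253$. The lemma claims $\tfrac{1}{2r}\bigl(5\sqrt{k\Tr\Lambda}+2k\norm{\Lambda}^{1/2}\bigr)$, i.e.\ coefficients $2.5$ and $1$, which are strictly smaller. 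The paper avoids this by writing $v_{b_i}=\tfrac{1}{2}\bigl[(2v_{b_i}-v)+v\bigr]$ and only routing the $(2v_{b_i}-v)$ half through Nesterov's theorem; the $v$ half is bounded directly by $\mb{E}\norm{\sum_i (Y_i-\mu)}\leq\sqrt{k\Tr\Lambda}$ with no $K_{2\to1}$ penalty. Your argument has no analogous splitting, so it cannot reproduce the paper's constants; it does suffice for the downstream Lemma~\ref{lem:mnConc} (which only needs $k/40$), but as a proof of Lemma~\ref{lem:exp} as stated it falls slightly short. A secondary issue: when $c_i=0$, the original MT constraint gives you nothing about $\inp{V\tilde u_{b_i}}{Y_i-\mu}$ for an arbitrary unit $\tilde u_{b_i}$; you must explicitly pick the sign of $\tilde u_{b_i}$ so that $\inp{V\tilde u_{b_i}}{Y_i-\mu}\geq 0$ (replacing $\tilde u_{b_i}$ by $-\tilde u_{b_i}$ if needed keeps TOR-feasibility), rather than appealing to the original constraint.
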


\begin{proof}
  Firstly, let $X$ be a feasible solution for $\mt(\mu, r, \bm{Y})$. We construct a new matrix $W$ which is indexed by $\sigma_i$ and $v_j$ as opposed to $b_i$ and $v_j$ for $X$:
  \begin{gather*}
    W_{\sigma_i, \sigma_j} = 4X_{b_i, b_j} - 2 X_{1, b_i} - 2 X_{1, b_j} + 1,\quad W_{v_i, v_j} = X_{v_i, v_j},\quad W_{1,1} = 1, \\
    W_{1, v_i} = X_{1, v_i}, \quad W_{1, b_i} = 2X_{1, b_i} - 1, \quad W_{v_i, b_j} = 2X_{v_i, b_j} - X_{1, v_i}.
  \end{gather*}
  We prove that $Y$ is a feasible solution to the SDP relaxation~\ref{eq:tor} of $\bm{Y} - \mu$. We see that:
  \begin{equation*}
    W_{\sigma_i, \sigma_i} = 1 \text{ and } \sum_{i = 1}^{d} W_{v_i, v_i} = 1.
  \end{equation*}
  Then, we simply need to verify that $Y$ is PSD. Let $w \in \mb{R}^{k + d + 1}$ indexed by $1$, $\sigma_i$ and $v_j$. We construct from $w$ a new vector $w^\prime$, indexed by $1$, $b_i$ and $v_j$ and defined as follows:
  \begin{equation*}
    w^\prime_{1} = w_{1} - \sum_{i = 1}^k w_{\sigma_i}, \quad w^\prime_{b_i} = 2 w_{\sigma_i}, \quad w^\prime_{v_j} = w_{v_j}.
  \end{equation*}
  With $w^\prime$ defined as above, we have the following equality:
  \begin{equation*}
    w^\top W w = (w^\prime)^\top X w^\prime \geq 0.
  \end{equation*}
  Since the above condition holds for all $w \in \mb{R}^{k + d + 1}$, we get that $Y \succcurlyeq 0$. Therefore, we conclude that $Y$ is a feasible solution to the SDP relaxation~\ref{eq:tor} of $\bm{Y} - \mu$.

  We bound the expected value of $\mt(\mu,r,\bm{Y})$ as follows, denoting by $v_{b_i}$ the vector $(X_{b_i, v_1}, \dots, X_{b_i, v_d})$ and by $v$ the vector $(X_{1, v_1}, \dots, X_{1, v_d})$:
  \begin{align*}
    \mb{E} \max_{X \in \mathcal{S}} \sum_{i = 1}^k X_{1, b_i} &= \mb{E} \max_{X \in \mathcal{S}} \sum_{i = 1}^k X_{b_i, b_i} \leq \frac{1}{r}\mb{E} \max_{X \in \mathcal{S}} \sum_{i = 1}^k \inp{v_{b_i}}{Y_i - \mu} \\
    &= \frac{1}{2r} \mb{E} \max_{X \in S}\Big[ \sum_{i = 1}^k \inp{2v_{b_i} - v}{Y_i - \mu} + \sum_{i = 1}^k \inp{v}{Y_i - \mu}\Big] \\
    &\leq \frac{1}{2r} \lprp{\mb{E} \max_{X \in S} \sum_{i = 1}^k \inp{2v_{b_i} - v}{Y_i - \mu} + \mb{E} \max_{X \in \mathcal{S}} \sum_{i = 1}^k \inp{v}{Y_i - \mu}}.
  \end{align*}
  We note that from the fact that $X$ is PSD, we have that (from the fact that the $2\times 2$ submatrix indexed by $v_i$ and $b_j$ is PSD):
  \begin{equation*}
    X^2_{v_i,b_j} \leq X_{v_i, v_i} X_{b_j, b_j} \leq X_{v_i, v_i} \implies \norm{v_{b_j}}^2= \sum_{i = 1}^d X^2_{v_i, b_j}  \leq \sum_{i = 1}^d X_{v_i, v_i} = 1.
  \end{equation*}
  Therefore, we get for the second term in the above equation:
  \begin{equation*}
    \mb{E} \max_{X \in \mathcal{S}} \sum_{i = 1}^{k} \inp{v}{Y_i - \mu} \leq \mb{E} \norm*{\sum_{i = 1}^{k} Y_i - \mu} \leq \lprp{\mb{E} \norm*{\sum_{i = 1}^{k} Y_i - \mu}^2}^{1/2} = (k \Tr \Lambda)^{1/2}.
  \end{equation*}
  We bound the first term using the following series of inequalities where $Y$ is constructed from $X$ as described above:
  \begin{align*}
    \mb{E} \max_{x \in \mathcal{S}} \sum_{i = 1}^k \inp{2v_{b_i} - v}{Y_i - \mu} &= \mb{E} \max_{x \in \mathcal{S}} \sum_{i = 1}^k \sum_{j = 1}^d (Y_i - \mu)_j W_{\sigma_i, v_j} = \mb{E} \max_{x \in \mathcal{S}} \sum_{i = 1}^k \sum_{j = 1}^d (\bm{Y}_{i, j} - \mu_j) W_{\sigma_i, v_j} \\
    &\leq 2 \mb{E} \normtto{\bm{Y} - \bm{1}\mu^\top} \leq 4 \sqrt{k \Tr \Lambda} + 2 k \norm{\Lambda}^{1/2},
  \end{align*}
  where the first inequality follows from Theorem~\ref{thm:contr} and the second inequality follows from Lemma~\ref{lem:tto}. By combining the above three inequalities, we finally get:
  \begin{equation*}
    \mb{E} \max_{x \in \mathcal{S}} \sum_{i = 1}^k X_{1, b_i} \leq \frac{1}{2r} \lprp{5 \sqrt{k \Tr \Lambda} + 2 k \norm{\Lambda}^{1/2}}.
  \end{equation*}
\end{proof}  
We are now able to prove Lemma~\ref{lem:mnConc}.

\begin{proof}[Lemma~\ref{lem:mnConc}]
  From Lemma~\ref{lem:exp}, we see that:
  \begin{equation*}
    \mb{E} \max_{X \in \mathcal{S}} \sum_{i = 1}^k X_{b_i, b_i} \leq \frac{k}{40}.
  \end{equation*}
  Now from Lemma~\ref{lem:conc} and an application of the bounded difference inequality (see, for example, Theorem~6.2 in \cite{boucheron2013concentration}), with probability at least $1 - \delta$:
  \begin{equation*}
    \max_{X \in \mathcal{S}} \sum_{i = 1}^k X_{b_i, b_i} \leq \frac{k}{20}.
  \end{equation*}
\end{proof}
\section{Proof of Theorem~\ref{thm:sgmest}}
\label{sec:proofsmth}
Let $\mathcal{G} = \{x: \norm{x - \mu} \leq \radAlg \}$. Also, we assume that Assumption~\ref{as:relax} holds. We prove the theorem differentiating between two cases:
  \begin{enumerate}
    \item[] \textbf{Case 1: } None of the iterates $\xt{t}$ fall into the set $\mathcal{G}$. In this case, we have from Lemma~\ref{lem:dest} that:
    \begin{equation}
    \label{eqn:dbo}
        0.95 \norm{\xt{t} - \mu} \leq \dt{t} \leq 1.25 \norm{\xt{t} - \mu}
    \end{equation}
    Now, we get:
    \begin{align*}
      \norm{\xt{t + 1} - \mu}^2 &= \norm{\xt{t} - \mu}^2 - 2\frac{\dt{t}}{20}\inp{\gt{t}}{\mu - \xt{t}} + \frac{\dt{t}^2}{400} \leq \norm{\xt{t} - \mu}^2 - \frac{\dt{t}\norm{\mu - \xt{t}}}{150} + \frac{\dt{t}^2}{400} \\
      &\leq \norm{\xt{t} - \mu}^2 - \dt{t} \lprp{\frac{\norm{\mu - \xt{t}}}{150} - \frac{\dt{t}}{400}} \leq \lprp{1 - \frac{1}{500}} \norm{\xt{t} - \mu}^2.
    \end{align*}
    where the first inequality follows from Lemma~\ref{lem:gest} and the last inequality follows by substituting the lower bound on $\dt{t}$ in the first term and the upper bound on $\dt{t}$ in the second term (Equation \eqref{eqn:dbo}).
    By an iterated application of the above inequality, we get the required result.
    \item[] \textbf{Case 2: } One of the iterates $\xt{t}$ falls into the set $\mathcal{G}$. If the algorithm returns an element from $\mathcal{G}$, the theorem is trivially true. From Lemma~\ref{lem:destg},  we have for this iterate $\xt{t}\in \mathcal{G}$ that:
    \begin{equation*}
      \dt{t} \leq \radAlgG.
    \end{equation*}
    Therefore, we have at the completion of the algorithm a value $\optd \leq \radAlgG$ together with $x^*$ lying outside $\mathcal{G}$. Thus, we finally have from Lemma~\ref{lem:dest}:
    \begin{equation*}
      0.95\norm{x^* - \mu} \leq \radAlgG \implies \norm{x^* - \mu} \leq \radRet.
    \end{equation*}
  \end{enumerate}
   By Lemma~\ref{lem:mnConc}, Assumption~\ref{as:relax} holds with probability at least $1-\delta$ and therefore, the conclusions from Case 1 and Case 2 hold with probability $1-\delta$.
   
   Substituting the value of $k$, we obtain
     \begin{align*}
    \norm{\optx - \mu} &\leq \max\lprp{\epsilon, \radRet} \\
    &\leq \max\lprp{\epsilon, \finret},
  \end{align*}
with probability at least $1 - \delta$.

This concludes the proof of the theorem.

\qed
\end{document}